\newcommand{\fr}{\mathfrak}
\DeclareMathOperator{\SO}{SO}
\DeclareMathOperator{\U}{U}
 \newtheorem{lemma} {Lemma} [section]
\newtheorem{theorem}[lemma]{Theorem} 
\newtheorem{remark}[lemma] {Remark} 
\newtheorem{prop} [lemma]{Proposition}  
\newtheorem{definition}[lemma] {Definition} 
\newtheorem{corol}[lemma] {Corollary} 
\newtheorem{example}[lemma] {Example}
\begin{document}

\title{Two-step homogeneous geodesics in homogeneous spaces}
\author{Andreas Arvanitoyeorgos  and Nikolaos Panagiotis Souris}
\address{University of Patras, Department of Mathematics, GR-26500 Patras, Greece}
\email{arvanito@math.upatras.gr}
\email{ntgeva@hotmail.com}
\begin{abstract}

We study geodesics of the form $\gamma(t)=\pi(\exp(tX)\exp(tY))$, $X,Y\in \fr{g}=\operatorname{Lie}(G)$, in homogeneous spaces $G/K$, where $\pi:G\rightarrow G/K$ is the natural projection.  These curves naturally generalise  homogeneous geodesics, that is orbits of one-parameter subgroups of $G$ (i.e. $\gamma(t)=\pi(\exp (tX))$, $X\in \fr{g}$).  We obtain sufficient conditions on a homogeneous space implying the existence of such geodesics for $X,Y\in \fr{m}=T_o(G/K)$.  We use these conditions to obtain examples of Riemannian homogeneous spaces $G/K$ so that all geodesics of $G/K$ are of the above form.  These include total spaces of homogeneous Riemannian submersions endowed with one parameter families of fiber bundle metrics, Lie groups endowed with special one parameter families of left-invariant metrics, generalised Wallach spaces, generalized flag manifolds, and $k$-symmetric spaces with $k$-even,  equipped with certain one-parameter families of invariant metrics.   

\medskip
\noindent  {\it Mathematics Subject Classification.} Primary 53C25; Secondary 53C30. 

\medskip
\noindent {\it Keywords}:     Homogeneous space; homogeneous geodesic; geodesic orbit space;   two-step homogeneous geodesic; Riemannian submersion; 
generalized Wallach space; generalized flag manifold; $k$-symmetric space
\end{abstract}

\maketitle

\medskip

\section{Introduction}

The investigation of geodesics in homogeneous spaces $G/K$ naturally focuses to those geodesics which are  orbits of one parameter subgroups of $G$.  These are curves of the form 
\begin{equation}\label{intro}\gamma(t)=\pi(\exp(tX)),\end{equation}
where $X$ is a non-zero vector in the Lie algebra $\fr{g}$ of $G$ and $\pi$ denotes the natural projection $G\rightarrow G/K$.
There are several classes of homogeneous spaces $G/K$ such that any geodesic $\gamma$ passing through the origin of $G/K$ is such an orbit, including \emph{Lie groups with bi-invariant metrics}, \emph{symmetric spaces}, \emph{compact homogeneous spaces with the standard metric}, and \emph{naturally reductive spaces}.\\
A geodesic of the form (\ref{intro}) is called a \emph{homogeneous geodesic} and a space $G/K$ such that any geodesic of $G/K$ passing through the origin is homogeneous, is called a \emph{geodesic orbit space (g.o. space)}.  Homogeneous geodesics were originally studied by B. Kostant and E.B. Vinberg.  In
\cite{Ko-Va} O. Kowalski and L. Vanhecke initiated the study of g.o spaces in the mid 80's and since then, g.o Riemannian and pseudo-Riemannian spaces have been extensivelly investigated by many authors.  We refer the reader to \cite{Du} for a survey on homogeneous geodesics and g.o. spaces, concerning both the Riemannian and pseudo-Riemannian case.

In this paper we consider a generalisation of homogeneous geodesics, namely geodesics of the form 
\begin{equation}\label{intro1}\gamma(t)=\pi(\exp(tX)\exp(tY)), \quad X,Y\in \fr{g},\end{equation}
which we name \emph{two-step homogeneous geodesics}.  In the main Theorem \ref{maintheorem} we find sufficient conditions on a Riemannian homogeneous space $G/K$, which  imply the existence of two-step homogeneous geodesics in $G/K$.  We use Theorem \ref{maintheorem} to obtain various classes of Riemannian homogeneous spaces $G/K$ such that any geodesic of $G/K$ passing through the origin is two-step homogeneous.   We call these spaces \emph{two-step g.o. spaces}.  A notable class of two-step g.o. spaces are total spaces $G/K$ of a homogeneous fibration
\begin{equation*}H/K\rightarrow G/K\rightarrow G/H
\end{equation*}
endowed with a standard metric which is ``deformed" along the fibers $H/K$ (cf. Proposition \ref{sub}).  Well known examples of such Riemannian spaces are the odd dimensional spheres $\mathbb S^{2n+1}$ as total spaces of the \emph{Hopf fibration}, endowed with \emph{Cheeger deformation} metrics $g_{1+\epsilon}$, $\epsilon>0$.  Other examples of two-step g.o spaces include Lie groups with special one parameter families of left invariant metrics, generalized flag manifolds and generalised symmetric spaces of even order (cf. Section 6).

Geodesics of the form (\ref{intro1}) were introduced by H. C. Wang in \cite{Wa} as geodesics in a semisimple Lie group $G$, equipped with a metric induced by a Cartan involution of the Lie algebra $\fr{g}$ of $G$.  In \cite{Da-Zi} D'Atri and Ziller proved that a large class of left-invariant metrics in a compact Lie group $G$ induce geodesics of the form $\gamma(t)=\exp(tX)\exp(tY)$.  If $G$ is simple, these are precisely the left-invariant metrics which are naturally reductive with respect to a subgroup of the isometry group of $G$.  In \cite{Do} R. Dohira proved that if the tangent space $T_o(G/K)$ of a homogeneous space splits into submodules $\fr{m}_1,\fr{m}_2$ satisfying certain algebraic relations, and if $G/K$ is endowed with a special one parameter family of Riemannian metrics $g_c$, then all geodesics of the Riemannian space $(G/K,g_c)$ are of the form (\ref{intro1}).   Corollary \ref{criterion} 
in the present paper is a generalisation of Dohira's result.
In \cite{Ar-So} the authors studied metrics whose geodesics are of the form 
$\gamma(t)=\pi(\exp(tX)\exp(tY)\exp(tZ))$ in \emph{generalized Wallach spaces}.  Moreover, the authors obtained a necessary and sufficient condition for a curve of the form $\gamma(t)=\pi(\exp(tX)\exp(tY)\exp(tZ))$, $X,Y,Z\in \fr{m}=T_o(G/K)$, to be a geodesic, which is a generalised version of the ``geodesic lemma" for homogeneous geodesics in \cite{Ko-Va}.  

The paper is organised as follows:
In Section 2 we state our main results (Theorem \ref{maintheorem}, Corollary \ref{criterion} and Proposition \ref{sub}).
In Section 3 we give some preliminary facts about invariant metrics on homogeneous spaces $G/K$ and invariant decompositions of $T_o(G/K)$.
In Section 4 we prove Theorem \ref{maintheorem} and Corollary \ref{criterion}.
In Section 5 we prove Proposition \ref{sub} and we give examples of two-step g.o. spaces that are total spaces of homogeneous Riemannian fibrations.  Finally, in Section 6 we use Corollary \ref{criterion} to give examples of Riemannian two-step g.o spaces including Lie groups, flag manifolds and $k$-symmetric spaces. \\

\noindent
{\bf Acknowledgements.}
The work was supported by Grant $\#E.037$ from the Research Committee of the University of Patras (Programme K. Karatheodori).
The first author had fruitful discussions with Professor Hiroshi Tamaru during his visit in Hiroshima University in Spring 2015.  Both authors acknowledge a comment of Professor Yu.G. Nikonorov (cf. Remark 5.4).

\section{Statement of main results}

Let $(G/K,g)$ be a Riemannian homogeneous manifold and consider the natural map $\pi:G\rightarrow G/K$.  Let $o=\pi(e)$ be the origin of $G/K$.   
			
\begin{definition}  A two-step homogeneous geodesic on $G/K$ is a geodesic $\gamma$ with the property that there exist  $X,Y\in \fr{g}$ such that $\gamma(t)=\pi(\exp(tX)\exp(tY))$, for any $t\in \mathbb R$ .\end{definition}

\begin{definition}A two-step geodesic orbit space (two-step g.o. space) is a Riemannian homogeneous space so that all geodesics $\gamma$ with $\gamma(0)=o$, are two-step homogeneous.\end{definition} 

The main theorem is the following: 

\begin{theorem}\label{maintheorem}
 Let $M=G/K$ be a homogeneous space admitting a naturally reductive Riemannian metric.  Let $B$ be the corresponding inner product on $\fr{m}=T_o(G/K)$.  We assume that $\fr{m}$ admits an $\operatorname{Ad}(K)$-invariant orthogonal decomposition 

\begin{equation}\label{decco}\fr{m}=\fr{m}_1\oplus \fr{m}_2\oplus \cdots \oplus \fr{m}_s,\end{equation}

\noindent with respect to $B$.  We equip $G/K$ with a $G$-invariant Riemannian metric $g$ corresponding to the $\operatorname{Ad}(K)$-invariant positive definite inner product 

\begin{equation}\label{metrics}\langle \ ,\ \rangle=\lambda_1\left.B\right|_{\fr{m}_1}+\cdots +\lambda_s\left.B\right|_{\fr{m}_s}, \quad \lambda_1,\cdots,\lambda_s>0.
\end{equation}

\noindent
 If $(\fr{m}_a,\fr{m}_b)$ is a pair of submodules in the decomposition (\ref{decco}) such that 

\begin{equation}\label{geod}[\fr{m}_a,\fr{m}_b]\subset \fr{m}_a,\end{equation}

\noindent then any geodesic $\gamma$ of $(G/K,g)$ with $\gamma(0)=o$ and $\dot{\gamma}(0)\in \fr{m}_a\oplus \fr{m}_b$, is a two-step homogeneous geodesic.\\
In particular, if $\dot{\gamma}(0)=X_a+X_b \in \fr{m}_a\oplus \fr{m}_b$, then for every $t\in \mathbb R$ this geodesic is given by \\

\begin{equation}\label{2step}\gamma(t)=\pi(\exp t(X_a+\lambda X_b)\exp t(1-\lambda)X_b),
\end{equation}

\noindent where $\lambda=\frac{\lambda_b}{\lambda_a}$.\\

Moreover, if any of the following relations holds:\\

1)\quad $\lambda_a=\lambda_b$ or\\

2)\quad  $[\fr{m}_a,\fr{m}_b]=\left\{ {0} \right\}$,\\

then $\gamma$ is a homogeneous geodesic, that is $\gamma(t)=\pi(\exp t(X_a+X_b))$.
\end{theorem}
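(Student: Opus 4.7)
The plan is to verify directly that the curve $\gamma$ defined by the right side of (\ref{2step}) is a geodesic with the correct initial velocity. Setting $\tilde X = X_a + \lambda X_b$ and $\tilde Y = (1-\lambda)X_b$ (where $\lambda = \lambda_b/\lambda_a$) and $h(t) = \exp(t\tilde X)\exp(t\tilde Y)$, one immediately has $\dot\gamma(0) = \tilde X + \tilde Y = X_a + X_b$. A short manipulation gives the pulled-back velocity
\[\eta(t) := h(t)^{-1}\dot h(t) = \Ad(\exp(-t(1-\lambda)X_b))(X_a + X_b),\]
and the hypothesis $[\fr{m}_a,\fr{m}_b] \subset \fr{m}_a$ implies that $\ad(X_b)$ preserves $\fr{m}_a$ and fixes $X_b$. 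Hence $\eta(t) = W(t) + X_b \in \fr{m}_a \oplus \fr{m}_b \subset \fr{m}$, where $W(t) := \Ad(\exp(-t(1-\lambda)X_b))X_a \in \fr{m}_a$, and $\dot\eta(t) = (1-\lambda)[W(t), X_b] \in \fr{m}_a$.

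The core step is to apply a two-step version of the Kowalski--Vanhecke geodesic lemma (analogous to the three-step version already established by the authors in \cite{Ar-So}), which, for a curve $\pi(h(t))$ whose pulled-back velocity $\eta(t)$ lies entirely in $\fr{m}$, reads
\[\langle \dot\eta(t), Z\rangle = \langle [\eta(t), Z]_{\fr{m}}, \eta(t)\rangle \quad \text{for all } Z\in\fr{m},\ t\in\mathbb{R},\]
and follows at once from $\dot\eta + U(\eta,\eta)=0$, where $U$ is the standard symmetric tensor of the $G$-invariant metric. I would verify this identity by substituting $\eta = W + X_b$ and exploiting the natural reductivity of $B$. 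Since $\dot\eta \in \fr{m}_a$, the left side collapses to $\lambda_a(1-\lambda)\,B([W(t),X_b], Z_a) = (\lambda_a - \lambda_b)\,B([W(t),X_b], Z_a)$. On the right, denoting the $\fr{m}_a$- and $\fr{m}_b$-parts of $[W(t)+X_b, Z]_{\fr{m}}$ by $C_a, C_b$, natural reductivity applied with $X = Y = W(t)+X_b$ forces $B(C_a, W(t)) + B(C_b, X_b) = 0$, so the right side reduces to $(\lambda_a - \lambda_b)\,B(C_a, W(t))$; a second use of natural reductivity, in the form $B([W(t)+X_b, Z]_{\fr{m}}, W(t)) = -B(Z, [W(t)+X_b, W(t)]_{\fr{m}}) = B(Z, [W(t), X_b])$, identifies this with $B([W(t),X_b], Z_a)$, matching the left side.

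The main obstacle I foresee is this reconciliation between the $B$-geometry (where natural reductivity is an identity) and the rescaled metric $\langle,\rangle$: both sides of the geodesic identity end up carrying a common prefactor $(\lambda_a-\lambda_b)$, and the value $\lambda = \lambda_b/\lambda_a$ is the unique choice producing that cancellation on both sides simultaneously. Once the identity is verified, the two listed corollaries are immediate: if $\lambda_a=\lambda_b$ then $\lambda=1$, so $\tilde Y = 0$ and $h(t) = \exp t(X_a + X_b)$; while if $[\fr{m}_a,\fr{m}_b]=\{0\}$ then $[\tilde X, \tilde Y] = (1-\lambda)[X_a, X_b] = 0$, so the two exponentials commute and $h(t) = \exp t(\tilde X + \tilde Y) = \exp t(X_a + X_b)$.
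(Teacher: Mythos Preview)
Your proposal is correct, and the verification of the two final corollaries is exactly the same as in the paper. The core computation is also essentially the same, but the packaging differs: the paper works directly with Koszul's formula
\[
g(V,\nabla_{\dot\gamma}\dot\gamma)=\dot\gamma\,g(V,\dot\gamma)+g(\dot\gamma,[V,\dot\gamma])-\tfrac{1}{2}Vg(\dot\gamma,\dot\gamma),
\]
expressing $\dot\alpha(t)=(X^R+Y^L)_{\alpha(t)}$, justifying that $\pi_*(X^R+Y^L)$ is a well-defined local extension of $\dot\gamma$, and then evaluating each of the three terms separately against test vector fields $V_{\pi(p)}=(\tau_p)_*Z$. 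Two of the three terms come out to $\pm(1-\lambda)\lambda_a\,B(Z,[Y_a(t),X_b]_{\fr m})$ and cancel, while the third vanishes because $\lambda\lambda_a-\lambda_b=0$. You instead invoke the Euler--Arnold form $\dot\eta+U(\eta,\eta)=0$ of the geodesic equation for horizontal curves (equivalently, the two-step analogue of the geodesic lemma from \cite{Ar-So}), which is legitimate here because you first check that $\eta(t)=W(t)+X_b$ lies entirely in $\fr m$; this bypasses the paper's vector-field bookkeeping and reduces the whole proof to a single identity. The trade-off is that your route assumes (or defers to \cite{Ar-So}) the derivation of that geodesic lemma, whereas the paper's Koszul computation is self-contained. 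In both arguments the decisive step is the same pair of applications of natural reductivity of $B$ that turns the right-hand side into $(\lambda_a-\lambda_b)\,B([W(t),X_b],Z)$, matching the left-hand side.
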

 
 \medskip
 We note that conditions 1) and 2) are only sufficient conditions for a two-step homogeneous geodesic 
 (\ref{2step}) to be a homogeneous geodesic.
For example, the weakly symmetric flag manifold $\SO(2l+1)/\U(l)$ with tangent space
decomposition
$T_o(\SO(2l+1)/\U(l))=\fr{m}_1\oplus \fr{m}_2$, with respect to the negative of the Killing form of $\fr{so}(2l+1)$, satisfies
$[\fr{m}_1,\fr{m}_2]\subset \fr{m}_1$.
Still, the  $\SO(2l+1)$-metrics given by
$g_{\lambda}=B|_{\fr{m}_1}+\lambda B_{\fr{m}_2}$, $\lambda>0$
are  one-parameter family of  metrics  such that all geodesics  are homogeneous (\cite{Al-Ar}).  
   In this case, neither of the conditions 1), 2) are satisfied.
     Any geodesic $\gamma$ with $\gamma(0)=o$ and $\dot{\gamma}(0)=X_1+X_2$, $X_i\in \fr{m}_i$, $i=1,2$, can be expressed as both  homogeneous and  two-step homogeneous geodesic.
     Indeed,
the homogeneous form is given by $\gamma(t)=\pi(\exp(a+X_1+X_2))$, where $a\in\fr{u}(l)$ 
is a vector   which depends on the choice of $X_1, X_2$.
On the other hand, by  Theorem \ref{maintheorem} $\gamma$ is also given by $\gamma(t)=\pi(\exp t(X_1+\lambda X_2)\exp t(1-\lambda)X_2)$ which is two-step homogeneous.
To find necessary conditions such that a two-step homogeneous geodesic is a (one-step) homogeneous, is  an open problem.

 \medskip
The following corollary provides a method to obtain many examples of two-step g.o. spaces.

\begin{corol}\label{criterion}Let $M=G/K$ be a homogeneous space admitting a naturally reductive Riemannian metric.  Let $B$ be the corresponding inner product of $\fr{m}=T_o(G/K)$.  We assume that $\fr{m}$ admits an $\operatorname{Ad}(K)$-invariant orthogonal decomposition

\begin{equation}\label{22}\fr{m}=\fr{m}_1\oplus\fr{m}_2\end{equation}

\noindent with respect to $B$, such that 
\begin{equation*}[\fr{m}_1,\fr{m}_2]\subset \fr{m}_1.\end{equation*}  Then $M$ admits an one-parameter family of $G$-invariant Riemannian metrics $g_{\lambda}$, $\lambda \in \mathbb R^+$, such that $(M,g_{\lambda})$ is a two-step g.o. space.\\
Each metric $g_{\lambda}$ corresponds to an $\operatorname{Ad}(K)$-invariant positive definite inner product on $\fr{m}$ of the form 

\begin{equation}\label{new}\langle \ ,\ \rangle=\left.B\right|_{\fr{m}_1}+\lambda \left.B\right|_{\fr{m}_2}.\end{equation}

This is homothetic to a metric corresponding to the inner product

\begin{equation*}\langle \ ,\ \rangle=\lambda_1\left.B\right|_{\fr{m}_1}+\lambda_2\left.B\right|_{\fr{m}_2},\end{equation*}

\noindent where $\lambda=\frac{\lambda_2}{\lambda_1}$.\end{corol}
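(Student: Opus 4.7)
The plan is to deduce Corollary \ref{criterion} as a direct specialization of Theorem \ref{maintheorem} to the case $s=2$, together with the elementary observation that a homothety of the metric does not alter the set of geodesics.

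First I would apply Theorem \ref{maintheorem} with the two-summand decomposition (\ref{22}), taking $(\fr{m}_a,\fr{m}_b)=(\fr{m}_1,\fr{m}_2)$. The bracket hypothesis $[\fr{m}_1,\fr{m}_2]\subset \fr{m}_1$ is exactly (\ref{geod}), so the theorem says that every geodesic $\gamma$ of $(G/K,g)$ with $\gamma(0)=o$ and $\dot\gamma(0)\in \fr{m}_1\oplus\fr{m}_2$ is two-step homogeneous, represented explicitly by formula (\ref{2step}). Since the decomposition has only two summands, we have $\fr{m}_1\oplus\fr{m}_2=\fr{m}=T_o(G/K)$, so the condition $\dot\gamma(0)\in\fr{m}_1\oplus\fr{m}_2$ is vacuous: it holds for every initial direction. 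Consequently every geodesic through $o$ is two-step homogeneous for any metric of the form $\lambda_1 B|_{\fr{m}_1}+\lambda_2 B|_{\fr{m}_2}$ with $\lambda_1,\lambda_2>0$, which is precisely the statement that $(M,g)$ is a two-step g.o. space.

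Next I would address the normalization. The inner product (\ref{new}) and the one with two parameters differ by an overall scaling: dividing $\lambda_1 B|_{\fr{m}_1}+\lambda_2 B|_{\fr{m}_2}$ by $\lambda_1$ yields $B|_{\fr{m}_1}+(\lambda_2/\lambda_1) B|_{\fr{m}_2}$, so the two metrics are homothetic with ratio $\lambda_1$ and $\lambda=\lambda_2/\lambda_1$. A homothety preserves the Levi-Civita connection and hence the geodesics as unparametrized curves, and since in our case the overall factor is a positive constant, the affinely parametrized geodesics coincide as well. Therefore the two-step g.o. property for the family $\lambda_1 B|_{\fr{m}_1}+\lambda_2 B|_{\fr{m}_2}$ descends to the one-parameter family $g_\lambda$ of (\ref{new}).

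There is essentially no obstacle here; the content of the result is packaged in Theorem \ref{maintheorem}, and the role of Corollary \ref{criterion} is to isolate the two-module situation where the hypothesis $\dot\gamma(0)\in \fr{m}_a\oplus\fr{m}_b$ is automatic and thus yields a \emph{g.o.}-type conclusion rather than a statement about a specific subset of geodesics. The only point requiring a brief comment is the reduction of the two-parameter family to the one-parameter family $g_\lambda$ via homothety, which I would state explicitly to justify the final sentence of the corollary.
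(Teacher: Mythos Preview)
Your proposal is correct and follows essentially the same approach as the paper: apply Theorem \ref{maintheorem} with $s=2$ and $(\fr{m}_a,\fr{m}_b)=(\fr{m}_1,\fr{m}_2)$, then observe that $\fr{m}_1\oplus\fr{m}_2=\fr{m}$ so every geodesic through $o$ is covered. The paper's proof is a terse two-sentence version of your first paragraph and does not even bother to spell out the homothety remark, which you handle with appropriate care.
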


An important class of two-step g.o. spaces can be obtained as the total space of a homogeneous Riemannian submersion $\pi:G/K\rightarrow G/H$, endowed with a special one-parameter family of fiber bundle metrics, as shown below.

\begin{prop}\label{sub}Let $G$ be a Lie group admitting a bi-invariant Riemannian metric and let $K,H$ be closed and connected subgroups of $G$, such that $K\subset H\subset G$.  Let $B$ be the $\operatorname{Ad}$-invariant positive definite inner product on the Lie algebra $\fr{g}$ corresponding to the bi-invariant metric of $G$.  We identify each of the spaces $T_o(G/K),T_o(G/H)$ and $T_o(H/K)$ with corresponding subspaces $\fr{m},\fr{m}_1$ and $\fr{m}_2$ of $\fr{g}$, such that $\fr{m}=\fr{m}_1\oplus \fr{m}_2$.  We endow $G/K$ with the $G$-invariant Riemannian metric $g_{\lambda}$ corresponding to the $\operatorname{Ad}(K)$-invariant positive definite inner product  

\begin{equation*}\langle \ ,\ \rangle=\left.B\right|_{\fr{m}_1}+\lambda \left.B\right|_{\fr{m}_2}, \quad \lambda>0,\end{equation*}

\noindent on $\fr{m}$.  Then $(G/K,g_{\lambda})$ is a two-step g.o. space.

\end{prop}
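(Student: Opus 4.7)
The plan is to verify that Proposition \ref{sub} is a direct application of Corollary \ref{criterion}, so the task reduces to checking that the three hypotheses of the corollary are in force for the subspaces $\mathfrak{m}_1, \mathfrak{m}_2$ arising from the chain $K\subset H\subset G$. The natural candidates are $\mathfrak{m}_1 := \mathfrak{h}^{\perp}$ inside $\mathfrak{g}$ (with respect to $B$) and $\mathfrak{m}_2 := \mathfrak{h}\cap \mathfrak{k}^{\perp}$, so that $\mathfrak{g} = \mathfrak{k}\oplus \mathfrak{m}_2 \oplus \mathfrak{m}_1$ is a $B$-orthogonal direct sum and $\mathfrak{m} = \mathfrak{m}_1\oplus \mathfrak{m}_2$ canonically identifies with $T_o(G/K)$.

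First I would record that $B|_{\mathfrak{m}}$ corresponds to a naturally reductive $G$-invariant metric on $G/K$: because $B$ is $\mathrm{Ad}(G)$-invariant on all of $\mathfrak{g}$, any decomposition $\mathfrak{g} = \mathfrak{k}\oplus \mathfrak{m}$ with $\mathfrak{m}=\mathfrak{k}^{\perp}$ is reductive, and the induced metric is naturally reductive with respect to $B$ (this is the standard observation used already in the paper). Next, I would verify $\mathrm{Ad}(K)$-invariance of each summand: $\mathrm{Ad}(K)$ preserves $\mathfrak{h}$ because $K\subset H$, and preserves $B$ because $K\subset G$; hence it preserves the $B$-orthogonal complements $\mathfrak{m}_1=\mathfrak{h}^{\perp}$ (inside $\mathfrak{g}$) and $\mathfrak{m}_2=\mathfrak{h}\cap \mathfrak{k}^{\perp}$ (inside $\mathfrak{h}$).

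The bracket condition $[\mathfrak{m}_1,\mathfrak{m}_2]\subset \mathfrak{m}_1$ is the only nontrivial identity, and even it is almost immediate. Since $\mathfrak{h}$ is a subalgebra and $\mathfrak{m}_1 = \mathfrak{h}^{\perp}$, for $X\in \mathfrak{m}_2\subset\mathfrak{h}$, $Y\in \mathfrak{m}_1$, and any $Z\in \mathfrak{h}$ the $\mathrm{ad}$-skewness of $B$ gives
\begin{equation*}
B([X,Y],Z) \;=\; -B(Y,[X,Z]) \;=\; 0,
\end{equation*}
because $[X,Z]\in \mathfrak{h}$ and $Y\perp_B \mathfrak{h}$. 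Hence $[X,Y]\in \mathfrak{h}^{\perp}=\mathfrak{m}_1$, so in fact $[\mathfrak{h},\mathfrak{m}_1]\subset \mathfrak{m}_1$, which specializes to the required inclusion.

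With these three ingredients the hypotheses of Corollary \ref{criterion} are met, and the corollary yields that $(G/K, g_{\lambda})$ is a two-step g.o.\ space for every $\lambda>0$. The main (very mild) obstacle here is the bookkeeping of the three identifications $T_o(G/K)\leftrightarrow \mathfrak{m}$, $T_o(G/H)\leftrightarrow \mathfrak{m}_1$, $T_o(H/K)\leftrightarrow \mathfrak{m}_2$ and their orthogonality with respect to $B$; once those are recorded, the rest is an application of results already proved.
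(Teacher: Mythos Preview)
Your proposal is correct and follows essentially the same approach as the paper: set $\mathfrak{m}_1=\mathfrak{h}^{\perp}$, $\mathfrak{m}_2=\mathfrak{h}\cap\mathfrak{k}^{\perp}$, verify $\operatorname{Ad}(K)$-invariance and $B$-orthogonality of the splitting, use $\operatorname{ad}$-skewness of $B$ to get $[\mathfrak{h},\mathfrak{m}_1]\subset\mathfrak{m}_1$ (the paper phrases this via $\operatorname{Ad}(H)\mathfrak{m}_1\subset\mathfrak{m}_1$, citing Example~\ref{bi}, but it is the same computation), and then invoke Corollary~\ref{criterion}.
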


Note that the natural map $\pi:G/K\rightarrow G/H$ is a Riemannian submersion with totally geodesic fibers (\cite{Ber}).\\

Theorem \ref{maintheorem} and Corollary \ref{criterion} will be proved in Section 4, and Proposition \ref{sub} will be proved in Section 5.  Other applications of Corollary \ref{criterion} will  be given in Section 6.

\section{Preliminaries}

\subsection{Reductive homogeneous Riemannian spaces and $G$-invariant metrics}

Let $G$ be a Lie group with Lie algebra $\fr{g}$, and $K$ be a closed subgroup of $G$.  Consider the homogeneous manifold $G/K$ with origin $o$, and the projection $\pi:G\rightarrow G/K$.  The \emph{left translation} $\tau_g:G/K\rightarrow G/K$ is given by $\tau_g(\pi(h))=\pi(gh)$, $g,h\in G$.  A \emph{$G$-invariant Riemannian metric} on $G/K$ is a Riemannian metric which is invariant under left translations on $G/K$.  A \emph{Riemannian homogeneous space $(G/K,g)$} is a homogeneous manifold $G/K$ endowed with a Riemannian $G$-invariant metric $g$. 

Let $\fr{g},\fr{k}$ be the Lie algebras of $G,K$ respectively and let $\operatorname{Ad}:G\rightarrow \operatorname{Aut}(\fr{g})$ be the adjoint representation of $G$.  The homogeneous space $G/K$ is called \emph{reductive} if there exists a decomposition $\fr{g}=\fr{k}\oplus \fr{m}$ with $\operatorname{Ad}(K)\fr{m}\subset \fr{m}$.  The subspace $\fr{m}$ is naturally identified with the \emph{tangent space} $T_o(G/K)$ through the pushforward $(\pi_*)_e$ of the projection $\pi:G\rightarrow G/K$.  Any $G$-invariant Riemannian metric on $G/K$ corresponds to a unique \emph{$\operatorname{Ad}(K)$-invariant positive definite inner product} on $\fr{m}$ and vice versa.\\

\begin{definition}A reductive Riemannian homogeneous space $(G/K,g)$ is called \emph{naturally reductive} if the $G$-invariant metric $g$ corresponds to a positive definite inner product $B:\fr{m}\times \fr{m}\rightarrow \fr{m}$ with the property 

\begin{equation}\label{nr}B([X,Y]_{\fr{m}},Z)+B(Y,[X,Z]_{\fr{m}})=0, \quad \makebox{for all} \quad X,Y,Z\in \fr{m}.\end{equation}  

\end{definition}

The  class of naturally reductive Riemannian homogeneous spaces includes \emph{symmetric spaces}, \emph{Lie groups with bi-invariant metrics}, \emph{flag manifolds with the standard metric}, and \emph{generalised symmetric spaces}.  

\subsection{Homogeneous geodesics in homogeneous spaces}

It is  well known  that any homogeneous Riemannian manifold $(G/K,g)$ is geodesically complete, i.e. any geodesic $\gamma$ on $G/K$ is defined for all $t\in \mathbb R$.

\begin{definition} 
 
A homogeneous geodesic on a Riemannian homogeneous space $G/K$ is a geodesic $\gamma$ with the property that there exists a vector $X\in \fr{g}\setminus \left\{ {0} \right\}$, such that $\gamma(t)=\pi(\exp(tX))$ for any $t\in \mathbb R$.  
A Riemannian geodesic orbit space (\emph{g.o. space}) is a Riemannian homogeneous space $(G/K,g)$ so that all geodesics $\gamma$ with $\gamma(0)=o$ are homogeneous.\end{definition}

Every naturally reductive space is a g.o space.  Important examples of non naturally reductive g.o. spaces are the \emph{generalised Heisenberg groups}.

\subsection{Invariant decompositions of $\fr{m}=T_o(G/K)$}
     
We assume that $B:\fr{m}\times \fr{m}\rightarrow \fr{m}$ is an $\operatorname{Ad}(K)$-invariant positive definite inner product on $\fr{m}$.  An \emph{$\operatorname{Ad}(K)$-invariant and $B$-orthogonal decomposition of $\fr{m}$} is a decomposition of the form 

\begin{equation*}\fr{m}=\fr{m}_1\oplus \fr{m}_2\oplus \cdots \oplus \fr{m}_s,\end{equation*}

\noindent with respect to $B$, such that $\operatorname{Ad}(K)\fr{m}_i\subset \fr{m}_i$, $i=1,2,\dots,s.$  

  An important example of such a decomposition is the following:

\begin{example}
Let $G/K$ be a compact homogeneous space.  The isotropy representation  $\rho:K\rightarrow \operatorname{Aut}(\fr{m})$ of $G/K$, given by $\rho(k)X=((\tau_k)_*)_o(X)$, $k\in K,X\in \fr{m}$, is completely reducible, therefore it induces a decomposition of $\fr{m}$

\begin{equation}\label{isotropy}\fr{m}=\fr{n}_1\oplus \cdots \oplus \fr{n}_l,\end{equation}

\noindent into irreducible submodules $\fr{n}_i$, $i=1,\dots ,l$ with respect to an $\operatorname{Ad}(K)$-invariant positive definite inner product $B$ on $\fr{m}$ .
It is well known that the isotropy representation $\rho$ of $K$ is equivalent to the restriction $\left.\operatorname{Ad}(K)\right|_{\fr{m}}$ of the adjoint representation of $K$ to $\fr{m}$.  Thus, it is $\operatorname{Ad}(K)\fr{n}_i\subset \fr{n}_i$, $i=1,\dots ,l$.\\
In the decomposition (\ref{isotropy}) there might exist submodules $\fr{n}_j$ that are pairwise equivalent (as subrepresentations of $K$).  If $\fr{n}_a,\fr{n}_b$ are two such equivalent submodules, then $\fr{n}_a$ is not necessarily orthogonal to $\fr{n}_b$ with respect to $B$.  By regrouping the pairwise equivalent subrepresentations $\fr{n}_{j}$ into submodules $\fr{m}_i$, we obtain a decomposition of the form  

\begin{equation*}\fr{m}=\fr{m}_1\oplus \fr{m}_2\oplus \cdots \oplus \fr{m}_s, \quad s\leq l,\end{equation*}

\noindent which is $\operatorname{Ad}(K)$-invariant and orthogonal with respect to $B$.
\end{example}  

If $\fr{m}=\fr{m}_1\oplus \cdots \oplus \fr{m}_s$ is an $\operatorname{Ad}(K)$-invariant, orthogonal decomposition of $\fr{m}$ with respect to an $\operatorname{Ad}(K)$-invariant positive definite inner product $B$, then the inner product 

\begin{equation*}\langle \ ,\ \rangle=\lambda_1\left.B\right|_{\fr{m}_1}+\cdots +\lambda_r\left.B\right|_{\fr{m}_r}, \quad \lambda_1,\cdots,\lambda_s>0, \end{equation*}

\noindent is $\operatorname{Ad}(K)$-invariant and positive definite, therefore it corresponds to a unique $G$-invariant Riemannian metric on $G/K$. 

\begin{remark}\label{remark}Let $\operatorname{ad}:\fr{g}\rightarrow \operatorname{End}(\fr{g})$ be the adjoint representation of $\fr{g}$ given by $\operatorname{ad}(X)Y=[X,Y]$.  The $\operatorname{Ad}(K)$-invariance of a subspace $\fr{m}_i$ implies the $\operatorname{ad}(\fr{k})$-invariance of $\fr{m}_i$.  Moreover, the $\operatorname{Ad}(K)$-invariance of an inner product $B$ implies the $\operatorname{ad}(\fr{k})$-skew symmetry of $B$.  The converse of these statements is true if $K$ is connected.
\end{remark} 

The following example will be used in Sections 5 and 6.

\begin{example}\label{bi} Let $G$ be a Lie group admitting a bi-invariant Riemannian metric $g$.  Let $K$ be a connected subgroup of $G$ and let $\fr{g},\fr{k}$ be the Lie algebras of $G,K$ respectively.  The bi-invariant metric $g$ corresponds to an $\operatorname{Ad}$-invariant positive definite inner product $B$ on $\fr{g}$, which induces an orthogonal decomposition $\fr{g}=\fr{k}\oplus \fr{m}$.  Then this decomposition is $\operatorname{Ad}(K)$-invariant. Indeed, for any $X_{\fr{k}},Y_{\fr{k}}\in \fr{k}$ and $X_{\fr{m}}\in \fr{m}$, we set $Z_{\fr{k}}=[X_{\fr{k}},Y_{\fr{k}}]\in \fr{k}$.  Then

\begin{equation*}B([X_{\fr{k}},X_{\fr{m}}],Y_{\fr{k}})=-B(X_{\fr{m}},[X_{\fr{k}},Y_{\fr{k}}])=-B(X_{\fr{m}},Z_{\fr{k}})=0.\end{equation*}    

It follows that $[X_{\fr{k}},X_{\fr{m}}]\in \fr{m}$, and since $K$ is connected then $\operatorname{Ad}(K)\fr{m}\subset \fr{m}$ (cf. Remark \ref{remark}).\end{example}

\subsection{Local projections of vector fields}

Let $\pi: G\to G/K$ be the projection and $p\in G$.
Then for each vector field $V$ in $G$ there exists an open neighborhood $U_{\pi(p)}$ of $\pi(p)$ in $G/K$, such that $\pi_*V$ is a well defined vector field in $U_{\pi(p)}$.  Indeed, since $\pi:G\rightarrow G/K$ is a submersion, there exists a neighborhood $U$ of $e$ in $G$ such that $\pi|_{U}:U\rightarrow \pi(U)$ is a bijection.  Moreover, $\pi(U)$ is an open neighborhood of $o$ in $G/K$ (cf. \cite[p. 546]{Lee}).  We set $U_{\pi(p)}=\tau_p(\pi(U))=\pi(L_pU)$, where $L_p$ is the left translation in $G$.  Since $L_p$ is a diffeomorphism of $G$, the map $\pi|_{L_pU}:L_pU\rightarrow U_{\pi(p)}$ is a bijection, therefore $\pi_*V$ is a well defined vector field in $U_{\pi(p)}$.

\section{Proof of the main results}

In this section we will prove Theorem \ref{maintheorem} and Corollary \ref{criterion}.  
The following lemmas will be useful.
Let $X^R,Y^L$ denote the right-invariant and  left-invariant vector fields in $G$ 
induced by $X,Y$, respectively.

\begin{lemma}\label{left} Let $G/K$ be a homogeneous space and let $X,Y \in \fr{g}$.    Then
\begin{equation}\label{l2}[X^L,Y^R]=0.\end{equation}
\end{lemma}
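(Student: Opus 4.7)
The plan is to reduce the bracket computation to the statement that the flows of $X^L$ and $Y^R$ commute, which in turn follows from the associativity of the group multiplication. This is the standard route and presents no real obstacle; the only point demanding care is keeping the left/right conventions straight.

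First I would identify the flows. By definition $X^L_p = (L_p)_*X_e$ and $Y^R_p = (R_p)_*Y_e$, where $L_p, R_p$ denote left and right translation in $G$. A direct differentiation shows that the flow of $X^L$ is the one-parameter group of right translations
\begin{equation*}
\phi_t(p) = p\exp(tX) = R_{\exp(tX)}(p),
\end{equation*}
since $\frac{d}{dt}\big|_{t=0} p\exp(tX) = (L_p)_*X_e = X^L_p$ and the orbit lies tangent to $X^L$ everywhere by the left-invariance of $X^L$. Analogously, the flow of $Y^R$ is the one-parameter group of left translations
\begin{equation*}
\psi_s(p) = \exp(sY)p = L_{\exp(sY)}(p).
\end{equation*}

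Next I would check that these flows commute. For every $p \in G$ and every $s,t\in\mathbb R$, associativity of the group product gives
\begin{equation*}
\phi_t(\psi_s(p)) = \exp(sY)\,p\,\exp(tX) = \psi_s(\phi_t(p)).
\end{equation*}
Hence $\phi_t \circ \psi_s = \psi_s \circ \phi_t$ as diffeomorphisms of $G$.

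Finally I would invoke the classical fact that the Lie bracket of two vector fields measures the infinitesimal non-commutativity of their flows: if two complete vector fields have globally commuting flows, their Lie bracket vanishes identically. Applying this to $X^L$ and $Y^R$ yields $[X^L,Y^R] = 0$, which is \eqref{l2}. (Alternatively, one can compute $[X^L,Y^R]_e$ using the formula $[V,W]_p = \frac{d}{dt}\big|_{t=0}(\phi_{-t})_*W_{\phi_t(p)}$ and conclude, via the left-invariance of $X^L$ together with the fact that $(\psi_s)_*X^L = X^L$ for all $s$, that the bracket vanishes at every point of $G$.)
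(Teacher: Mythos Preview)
Your proof is correct. The paper takes a slightly different, more hands-on route: it computes $[X^L,Y^R]_p f$ directly from the definition $[V,W]f = V(Wf)-W(Vf)$, expands each term as an iterated derivative, and obtains
\[
\left.\tfrac{d}{dt}\right|_0\left.\tfrac{d}{ds}\right|_0 f(\exp(sY)\,p\,\exp(tX)) - \left.\tfrac{d}{dt}\right|_0\left.\tfrac{d}{ds}\right|_0 f(\exp(tY)\,p\,\exp(sX)),
\]
which vanishes by equality of mixed partials. Both arguments rest on the same fact---associativity makes left and right translations commute---but you package it via the theorem ``commuting flows $\Rightarrow$ vanishing bracket,'' whereas the paper unwinds that theorem explicitly in this instance. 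Your version is cleaner and more conceptual; the paper's is entirely self-contained and avoids quoting the flow characterization of the bracket.
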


\begin{proof}    
Let $p\in G$ and let $f:G\rightarrow \mathbb R$ be a smooth function.  It is

\begin{equation*}
\begin{array}{lll}
 \displaystyle{[X^L,Y^R]_pf} &=&\displaystyle{X^L_p(Y^Rf)}-\displaystyle{{Y^R_p}(X^Lf)} \nonumber \ \\\\

\ \ \ &=&\displaystyle{\left.\frac{d}{dt}\right|_{t=0}(Y^Rf)(p\exp (tX))}-\displaystyle{\left.\frac{d}{dt}\right|_{t=0}(X^Lf)(\exp (tY) p)}\nonumber \ \\\\

\ \ \ &=& \displaystyle{\left.\frac{d}{dt}\right|_{t=0}Y^R_{p\exp (tX)}f}-\displaystyle{\left.\frac{d}{dt}\right|_{t=0}X^L_{\exp (tY) p}f}\nonumber \ \\\\

\ \ \ &=&\displaystyle{\left.\frac{d}{dt}\right|_{t=0}\left.\frac{d}{ds}\right|_{s=0}f(\exp (sY) p \exp (tX))}-\displaystyle{\left.\frac{d}{dt}\right|_{t=0}\left.\frac{d}{ds}\right|_{s=0}f(\exp (tY) p \exp (sX))}=0.
\end{array}
\end{equation*} 

\end{proof}

\begin{lemma}\label{analytic} Let $G$ be a Lie group with Lie algebra $\fr{g}$.  We assume that there exist subspaces $\fr{m}_a,\fr{m}_b$ of $\fr{g}$ such that $[\fr{m}_a,\fr{m}_b]\subset \fr{m}_a$.  Then $\operatorname{Ad}(\exp(\fr{m}_b))\fr{m}_a\subseteq \fr{m}_a$.\end{lemma}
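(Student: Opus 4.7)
The plan is to reduce the statement to the well known identity $\operatorname{Ad}(\exp Y) = \exp(\operatorname{ad} Y)$, valid for any $Y \in \fr{g}$, and then use the hypothesis $[\fr{m}_a,\fr{m}_b]\subset \fr{m}_a$ to check that each term of the resulting exponential series lies in $\fr{m}_a$.

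First I would fix $Y\in \fr{m}_b$ and $X\in \fr{m}_a$, and write
\begin{equation*}
\operatorname{Ad}(\exp Y)X \;=\; \exp(\operatorname{ad} Y)X \;=\; \sum_{n=0}^{\infty}\frac{1}{n!}(\operatorname{ad} Y)^n X.
\end{equation*}
The key observation is that $(\operatorname{ad} Y)X = [Y,X] = -[X,Y] \in \fr{m}_a$, by the hypothesis $[\fr{m}_a,\fr{m}_b]\subset \fr{m}_a$. I would then prove by induction on $n$ that $(\operatorname{ad} Y)^n X \in \fr{m}_a$ for every $n\geq 0$: the base case $n=0$ is trivial, and for the inductive step one applies $\operatorname{ad} Y$ to the element $(\operatorname{ad} Y)^{n-1}X\in \fr{m}_a$, which again lies in $\fr{m}_a$ by the same hypothesis.

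Since $\fr{m}_a$ is a linear subspace of the finite-dimensional vector space $\fr{g}$, it is closed, so the convergent series $\sum_{n\geq 0}\frac{1}{n!}(\operatorname{ad} Y)^n X$ lies in $\fr{m}_a$. This gives $\operatorname{Ad}(\exp Y)X\in \fr{m}_a$ for every $Y\in \fr{m}_b$ and $X\in \fr{m}_a$, which is precisely the conclusion $\operatorname{Ad}(\exp(\fr{m}_b))\fr{m}_a\subseteq \fr{m}_a$.

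There is no real obstacle here; the only point that deserves care is the justification of the identity $\operatorname{Ad}(\exp Y)=\exp(\operatorname{ad} Y)$, which is a standard fact from Lie theory and can simply be cited. Everything else is an elementary induction combined with the closedness of a finite-dimensional subspace.
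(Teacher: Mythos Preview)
Your proof is correct and follows essentially the same approach as the paper: expand $\operatorname{Ad}(\exp Y)$ as the exponential series in $\operatorname{ad} Y$, use induction together with $[\fr{m}_a,\fr{m}_b]\subset \fr{m}_a$ to place each partial sum in $\fr{m}_a$, and invoke closedness of $\fr{m}_a$ to pass to the limit. Your explicit remark that $\fr{m}_a$ is closed because it is a linear subspace of a finite-dimensional space is a nice touch that the paper leaves implicit.
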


\begin{proof}  

The homomorphism $\operatorname{Ad}:G\rightarrow \operatorname{Aut}{\fr{g}}$ is analytic (\cite{Hel} p.126), and for any $X\in \fr{g}$, it is

\begin{equation*}\operatorname{Ad}(\exp_{\fr{g}}(X))=\exp_{\operatorname{Aut}{\fr{g}}}\operatorname{ad}(X)=\sum_{n=0}^{\infty}{\frac{1}{n!}\operatorname{ad}^n(X)}.\end{equation*}

 Let $X_a\in \fr{m}_a$ and $X_b\in \fr{m}_b$.  Then 
\begin{equation*}\operatorname{Ad}(\exp_{\fr{g}}(X_b))X_a=\sum_{n=0}^{\infty}{\frac{1}{n!}\operatorname{ad}^n(X_b)X_a}.\end{equation*}

  Since $[\fr{m}_a,\fr{m}_b]\subset \fr{m}_a$, we use induction to obtain that $\operatorname{ad}^n(X_b)X_a\in \fr{m}_a$, for any $n\in \mathbb N$.  Consequently, for any $N \in \mathbb N$, we have that 
	
	\begin{equation*}\sum_{n=0}^{N}{\frac{1}{n!}\operatorname{ad}^n(X_b)X_a}\in \fr{m}_a.\end{equation*}  Since $\fr{m}_a$ is (topologically) closed, then 
	
	\begin{equation*}\lim_{N\rightarrow \infty}{\sum_{n=0}^{N}{\frac{1}{n!}\operatorname{ad}^n(X_b)X_a}}\in \fr{m}_a,\end{equation*} 
	which proves the lemma.
\end{proof}

\medskip
\noindent
\emph{Proof of Theorem \ref{maintheorem}}. 
We set $X=X_a+\lambda X_b$, $Y=(1-\lambda)X_b$, $\alpha(t)=\exp tX \exp tY$ and $\gamma=\pi\circ \alpha$.  Let $\nabla$ be the Riemannian connection of $(G/K,g_{\lambda})$.  Then $\gamma (t)$ is a geodesic if and only if $\nabla_{\dot{\gamma}}\dot{\gamma}=0$.  By using Koszul's formula we have that

\begin{equation} \label{koz1} g(V,\nabla_{\dot{\gamma}}\dot{\gamma})=\dot{\gamma}g(V,\dot{\gamma})+g(\dot{\gamma},[V,\dot{\gamma}])-\frac{1}{2}Vg(\dot{\gamma},\dot{\gamma}),\end{equation}

\noindent for any vector field $V$ in $G/K$.  We will first show that the vector field $\nabla_{\dot{\gamma}}\dot{\gamma}$ is well defined.  Indeed, let $R_{\alpha(t)}$, $L_{\alpha(t)}$ be the right and left translation respectively on $G$ by $\alpha(t)$.  Then

\begin{eqnarray}
 \displaystyle{\dot{\alpha}(t)} &=&\displaystyle{\left.\frac{d}{ds}\right|_{s=0}\alpha(t+s)}=\displaystyle{\left.\frac{d}{ds}\right|_{s=0}\exp (t+s)X \exp (t+s)Y} \nonumber \\
 &=&
\displaystyle{\left.\frac{d}{ds}\right|_{s=0}\exp (t+s)X \exp tY} +\displaystyle{\left.\frac{d}{ds}\right|_{s=0}\exp tX \exp (t+s)Y} \nonumber \\
 &=& 
\displaystyle{\left.\frac{d}{ds}\right|_{s=0}\exp sX \exp tX \exp tY} + \displaystyle{\left.\frac{d}{ds}\right|_{s=0}\exp tX \exp tY \exp sY} \nonumber \\
&=&
\displaystyle{\left.\frac{d}{ds}\right|_{s=0}\exp sX \alpha(t)}+ \displaystyle{\left.\frac{d}{ds}\right|_{s=0}\alpha(t) \exp sY} \nonumber \\
&=&
\displaystyle{(R_{\alpha(t)})_*(X)+(L_{\alpha(t)})_*Y}=\displaystyle{(X^R+Y^L)_{\alpha(t)}}.\label{dott}
\end{eqnarray}

\noindent
  Equation (\ref{dott}) implies that the vector field $\dot{\alpha}$ along the curve $\alpha$ can be extended to the vector field $X^R+Y^L$ in $G$.  Then, for any $t\in \mathbb R$, there exists a  neighborhood $U_{\pi(\alpha(t))}$ of $\pi(\alpha(t))=\gamma(t)$ in $G/K$ such that $\pi_*(X^R+Y^L)$ is a well defined vector field in $U_{\pi(\alpha(t))}$, which locally extends $\dot{\gamma}$  (cf. Section 3.4).  Therefore, $\nabla_{\dot{\gamma}}\dot{\gamma}$ is well defined.\\
Next, we will show that the right-hand side of equation (\ref{koz1}) vanishes for any $t\in \mathbb R$ and for any vector field $V$ in $G/K$.   
Since a basis of $\fr{m}$ can be
transfered to  a basis of $T_{\pi(p)}G/K$ by $(\tau_p)_*$, it suffices to consider any vector field $V$ defined by 

\begin{equation}\label{vf}V_{\pi(p)}=(\tau_p)_*Z=(\pi_*Z^L)_{\pi(p)}, \quad Z\in \fr{m}.\end{equation}

Moreover, we have that 

\begin{eqnarray}
\displaystyle{\dot{\gamma}(t)}&=&\displaystyle{(\pi_*)_{\alpha(t)}(X^R+Y^L)_{\alpha(t)}}=\displaystyle{(\tau_{\alpha(t)})_*((\tau_{\alpha(t)^{-1}})_*(\pi_*)X^R_{\alpha(t)}+Y)}\label{uu} \\
&=&
\displaystyle{(\tau_{\alpha(t)})_*(\left.\operatorname{Ad}({\alpha(t)}^{-1})X\right|_{\fr{m}}+Y)}. \label{i}
\end{eqnarray}

We set 
\begin{equation*}TX=\operatorname{Ad}({\alpha(t)}^{-1})X=\operatorname{Ad}(\exp(-tY)\exp (-tX))X=\operatorname{Ad}(\exp(-tY))X.\end{equation*}
Since $Y=(1-\lambda)X_b\in \fr{m}_b$, and by taking into account relation (\ref{geod}),  Lemma \ref{analytic} implies that $TX_a\in \fr{m}_a$.  Note also that $TX_b=X_b$.  We set $Y_a(t)=TX_a$.  Then 

\begin{equation}\label{TT}TX=T(X_a+\lambda X_b)=Y_a(t)+\lambda X_b, \end{equation}
and by relation (\ref{i}) it follows that
\begin{equation}\label{ii}\dot{\gamma}(t)=(\tau_{\alpha(t)})_*(Y_a(t)+X_b).\end{equation}
Let $t\in \mathbb R$. By using equations (\ref{vf}) and (\ref{i}) and by taking into account the $G$-invariance of the metric $g$, the first term in the right-hand side of (\ref{koz1}) becomes 

\begin{eqnarray}
\displaystyle{\dot{\gamma}(t)g(V,\dot{\gamma})}&=&\displaystyle{\left.\frac{d}{ds}\right|_{s=0}g((\tau_{\alpha(t+s)})_*Z,(\tau_{\alpha(t+s)})_*(\left.\operatorname{Ad}({\alpha(t+s)}^{-1})X\right|_{\fr{m}}+Y))}\nonumber \\
&=&
\displaystyle{\left.\frac{d}{ds}\right|_{s=0}\langle Z,\left.\operatorname{Ad}({\alpha(t+s)}^{-1})X\right|_{\fr{m}}+Y\rangle}.\label{t1}     
\end{eqnarray}

Also,
\begin{eqnarray}\displaystyle{\left.\frac{d}{ds}\right|_{s=0}\left.\operatorname{Ad}({\alpha(t+s)}^{-1})X\right|_{\fr{m}}}&=&\displaystyle{\left.\left.\frac{d}{ds}\right|_{s=0}\operatorname{Ad}(\exp (-t-s)Y \exp (-t-s)X)X\right|_{\fr{m}}}\nonumber \\
&=&
\displaystyle{\left.\left.\frac{d}{ds}\right|_{s=0}\operatorname{Ad}(\exp (-t-s)Y)X\right|_{\fr{m}}}\nonumber \\
&=&
\displaystyle{[\operatorname{Ad}(\exp (-tY))X,Y]_{\fr{m}}}=\displaystyle{[TX,Y]_{\fr{m}}}.\label{t}
\end{eqnarray}

By considering relations (\ref{metrics}), (\ref{geod}) and by taking into account relations (\ref{TT}) and (\ref{t}), then equation (\ref{t1}) implies that the first term of the right-hand side of equation (\ref{koz1}) reduces to

\begin{eqnarray}\displaystyle{\dot{\gamma}(t)g(V,\dot{\gamma})}&=&\displaystyle{\langle [Y_a(t)+\lambda X_b,(1-\lambda)X_b]_{\fr{m}},Z\rangle}=\displaystyle{(1-\lambda) \langle [Y_a(t),X_b]_{\fr{m}},Z\rangle}\nonumber \\
&=&\displaystyle{(1-\lambda) \langle [Y_a(t),X_b]_{\fr{m}_a},Z\rangle}=(1-\lambda)\lambda_aB(Z,[Y_a(t),X_b]_{\fr{m}})\label{term1}.
\end{eqnarray}

To calculate the second term in the right hand side of (\ref{koz1}), we use expressions (\ref{uu}), (\ref{ii}) and Lemma \ref{left}, as well as  relation (\ref{vf}) and the $G$-invariance of the metric $g$.  Hence we obtain that

\begin{eqnarray}\displaystyle{g(\dot{\gamma}(t),[V,\dot{\gamma}]_{\gamma(t)})}&=&\displaystyle{g((\tau_{\alpha(t)})_*(Y_a(t)+X_b),[(\pi_*)Z^L,(\pi_*)(X^R+Y^L)]_{\gamma(t)})}\nonumber\\
&=&
\displaystyle{g((\tau_{\alpha(t)})_*(Y_a(t)+X_b),((\pi_*)[Z^L,X^R+Y^L])_{\gamma(t)})}\nonumber \\
&=&
\displaystyle{g((\tau_{\alpha(t)})_*(Y_a(t)+X_b),(\pi_*)(L_{\alpha(t)})_*[Z,Y])}\nonumber \\
&=&
\displaystyle{g((\tau_{\alpha(t)})_*(Y_a(t)+X_b),(\tau_{\alpha(t)})_*(\pi_*)_e[Z,Y])}\nonumber \\
&=&
\displaystyle{\langle Y_a(t)+X_b,[Z,Y]_{\fr{m}}\rangle}=\displaystyle{(1-\lambda)\langle Y_a(t)+X_b,[Z,X_b]_{\fr{m}}\rangle}.\label{nrp} 
\end{eqnarray}

By taking into account relation (\ref{metrics}) and the natural reductivity property of $B$, it follows that

\begin{eqnarray}\displaystyle{\langle Y_a(t)+X_b,[Z,X_b]_{\fr{m}}\rangle}&=&\displaystyle{\lambda_aB( Y_a(t),[Z,X_b]_{\fr{m}}) +\lambda_bB (X_b,[Z,X_b]_{\fr{m}})}\nonumber \\
&=&
\displaystyle{-\lambda_aB(Z,[Y_a(t),X_b]_{\fr{m}})-\lambda_bB(Z,[X_b,X_b]_{\fr{m}})}\nonumber \\
&=&
\displaystyle{-\lambda_aB(Z,[Y_a(t),X_b]_{\fr{m}})}.\label{te}
\end{eqnarray}

By using (\ref{te}), then relation (\ref{nrp}) implies that the second term in the right-hand side of equation (\ref{koz1}) reduces to 

\begin{equation}\label{term2}(\lambda-1)\lambda_aB(Z,[Y_a(t),X_b]_{\fr{m}}).\end{equation}

To calculate the third term of (\ref{koz1}), we use the local extension $\pi_*(X^R+Y^L)$ of $\dot{\gamma}(t)$, which at $\pi(p)$ ($p\in G$) is given by

\begin{equation*}(\pi_*)_p(X^R+Y^L)_p=(\tau_{p^{-1}})_*(\left.\operatorname{Ad}(p^{-1})X\right|_{\fr{m}}+Y).\end{equation*}

We have that

\begin{eqnarray}\displaystyle{V_{\gamma(t)}g(\dot{\gamma},\dot{\gamma})}&=&\displaystyle{((\tau_{\alpha(t)})_*Z)g(\dot{\gamma},\dot{\gamma})}\nonumber \\
&=&
\displaystyle{\left.\frac{d}{ds}\right|_{s=0}g((\tau_{p^{-1}})_*(\left.\operatorname{Ad}(p^{-1})X\right|_{\fr{m}}+Y),(\tau_{p^{-1}})_*(\left.\operatorname{Ad}(p^{-1})X\right|_{\fr{m}}+Y))}\nonumber \\
&=&
\displaystyle{\left.\frac{d}{ds}\right|_{s=0}\langle \left.\operatorname{Ad}(p^{-1})X\right|_{\fr{m}}+Y,\left.\operatorname{Ad}(p^{-1})X\right|_{\fr{m}}+Y\rangle },\label{dom} 
\end{eqnarray}	
	
\noindent where $p=\alpha(t)\exp sZ$.  Notice that $\pi(p)$ is sufficiently close to $\gamma(t)$ if $s$ is sufficiently small, therefore $\pi(p)$ lies in the domain of the local extension $\pi_*(X^R+Y^L)$ for small $s$.  Moreover, it is

\begin{eqnarray}\displaystyle{\left.\frac{d}{ds}\right|_{s=0}\left.\operatorname{Ad}(p^{-1})X\right|_{\fr{m}}}&=&\displaystyle{\left.\left.\frac{d}{ds}\right|_{s=0}\operatorname{Ad}(\exp(-sZ))\operatorname{Ad}({\alpha(t)}^{-1})X\right|_{\fr{m}}}\nonumber \\
&=&
\displaystyle{[TX,Z]_{\fr{m}}}=\displaystyle{[Y_a(t),Z]_{\fr{m}}+\lambda[X_b,Z]_{\fr{m}}}.\label{en}
\end{eqnarray}	
	
By using relation (\ref{en}), assumptions (\ref{metrics}) and (\ref{geod}) of the theorem, and the natural reductivity property of $B$, then equation (\ref{dom}) yields the
third term of the right hand side of (\ref{koz1}) as follows:

\begin{eqnarray}\displaystyle{V_{\gamma(t)}g(\dot{\gamma},\dot{\gamma})}&=&\displaystyle{2\langle [Y_a(t),Z]_{\fr{m}}+\lambda[X_b,Z]_{\fr{m}},Y_a(t)+X_b\rangle}\nonumber \\
&=&
\displaystyle{2\langle [Y_a(t),Z]_{\fr{m}},Y_a(t)\rangle}+\displaystyle{2\langle [Y_a(t),Z]_{\fr{m}},X_b\rangle}\nonumber \\
&&
+\displaystyle{2\lambda\langle [X_b,Z]_{\fr{m}},Y_a(t)\rangle}+\displaystyle{2\lambda\langle [X_b,Z]_{\fr{m}},X_b\rangle}\nonumber \\ 
&=&
\displaystyle{-2\lambda_aB(Z,[Y_a(t),Y_a(t)]_{\fr{m}})}-\displaystyle{2\lambda_bB(Z,[Y_a(t),X_b]_{\fr{m}})}\nonumber \\
&&
+\displaystyle{2\lambda\lambda_aB(Z,[Y_a(t),X_b]_{\fr{m}})}-\displaystyle{2\lambda\lambda_bB(Z,[X_b,X_b]_{\fr{m}})}\nonumber \\ 
&=&
\displaystyle{2B(Z,[Y_a(t),X_b]_{\fr{m}})(\lambda\lambda_a-\lambda_b)}=0\label{term3}.
\end{eqnarray}
	
By summing equations (\ref{term1}), (\ref{term2}) and (\ref{term3}), we obtain that the right-hand side of equation (\ref{koz1}) vanishes, so the first part of the theorem  follows. 	
	
\smallskip

If $\lambda_a=\lambda_b$, then $\lambda=1$, therefore 

\begin{equation*}\gamma(t)=\pi(\exp t(X_a+\lambda X_b)\exp t(1-\lambda)X_b)=\pi(\exp t(X_a+X_b)).\end{equation*}

Finally, if $[\fr{m}_a,\fr{m}_b]=\left\{ {0} \right\}$ then the vectors $X_a+\lambda X_b$ and $X_b$ commute, therefore
 
\begin{equation*}\gamma(t)=\pi(\exp t(X_a+\lambda X_b)\exp t(1-\lambda)X_b)=\pi(\exp [t(X_a+\lambda X_b)+t(1-\lambda)X_b])=\pi(\exp t(X_a+X_b)),\end{equation*}
 
which proves the second part of the theorem. 
\hfill\(\Box\)\\

\noindent
\emph{Proof of Corollary \ref{criterion}}.  Since $[\fr{m}_1,\fr{m}_2]\subset \fr{m}_1$, then Theorem \ref{maintheorem} implies that any geodesic $\gamma$ of $M$ with $\gamma(0)=o$ and $\dot{\gamma}(0)\in \fr{m}_1\oplus \fr{m}_2$ is two-step homogeneous.  Hence all geodesics $\gamma$ of $M$ with $\gamma(0)=o$ are two-step homogeneous, therefore $(M,g_{\lambda})$ is a two-step g.o space.
\hfill\(\Box\)

\section{Total spaces of homogeneous Riemannian submersions}

A natural application of Corollary \ref{criterion} is for total spaces of homogeneous Riemannian submersions.  We have the following:

\begin{prop}\label{su}Let $G$ be a Lie group admitting a bi-invariant Riemannian metric and let $K,H$ be closed and connected subgroups of $G$, such that $K\subset H\subset G$.  Let $B$ be the $\operatorname{Ad}$-invariant positive definite inner product on the Lie algebra $\fr{g}$ corresponding to the bi-invariant metric of $G$.  We identify each of the spaces $T_o(G/K),T_o(G/H)$ and $T_o(H/K)$ with corresponding subspaces $\fr{m},\fr{m}_1$ and $\fr{m}_2$ of $\fr{g}$, such that $\fr{m}=\fr{m}_1\oplus \fr{m}_2$.  We endow $G/K$ with the $G$-invariant Riemannian metric $g_{\lambda}$ corresponding to the $\operatorname{Ad}(K)$-invariant positive definite inner product  

\begin{equation}\label{ta}\langle \ ,\ \rangle=\left.B\right|_{\fr{m}_1}+\lambda \left.B\right|_{\fr{m}_2}, \quad \lambda>0,\end{equation}

\noindent on $\fr{m}$.  Then $(G/K,g_{\lambda})$ is a two-step g.o. space.

\end{prop}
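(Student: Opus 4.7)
The plan is to reduce the proposition to Corollary \ref{criterion} by verifying its three hypotheses directly from the submersion setup. First, I would identify a naturally reductive reference metric on $G/K$. Since $G$ admits a bi-invariant Riemannian metric with associated $\operatorname{Ad}$-invariant inner product $B$ on $\fr{g}$, and $\fr{m}$ is identified with the $B$-orthogonal complement of $\fr{k}$ in $\fr{g}$, Example \ref{bi} gives that $\fr{g}=\fr{k}\oplus\fr{m}$ is $\operatorname{Ad}(K)$-invariant; moreover, the full $\operatorname{Ad}$-invariance of $B$ restricts to the natural reductivity condition (\ref{nr}) on $\fr{m}$, so $B|_{\fr{m}}$ indeed corresponds to a naturally reductive $G$-invariant metric on $G/K$.

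Next, I would check that the decomposition $\fr{m}=\fr{m}_1\oplus \fr{m}_2$ is $B$-orthogonal and $\operatorname{Ad}(K)$-invariant. Under the natural identifications, $\fr{m}_2=\fr{h}\cap \fr{m}$ is the $B$-orthogonal complement of $\fr{k}$ in $\fr{h}$, and $\fr{m}_1=\fr{m}\ominus \fr{m}_2$ is the $B$-orthogonal complement of $\fr{h}$ in $\fr{g}$; thus the decomposition is $B$-orthogonal by construction. Since $K\subset H$ is closed and connected, $\operatorname{Ad}(K)\fr{h}\subset \fr{h}$, which combined with the $\operatorname{Ad}$-invariance of $B$ yields $\operatorname{Ad}(K)\fr{m}_1\subset \fr{m}_1$ (the argument used in Example \ref{bi} applied to the pair $\fr{k}\subset \fr{h}$); the same reasoning inside $\fr{h}$ shows $\operatorname{Ad}(K)\fr{m}_2\subset \fr{m}_2$.

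The remaining and essential step is the bracket inclusion $[\fr{m}_1,\fr{m}_2]\subset \fr{m}_1$. Given $X_1\in \fr{m}_1$, $X_2\in \fr{m}_2$ and an arbitrary $Y\in \fr{h}$, the $\operatorname{Ad}$-invariance of $B$ on $\fr{g}$ gives $B([X_1,X_2],Y)=-B(X_1,[X_2,Y])$. Since $X_2,Y\in \fr{h}$ and $\fr{h}$ is a subalgebra, $[X_2,Y]\in \fr{h}$; and $X_1$ is $B$-orthogonal to $\fr{h}$ by definition of $\fr{m}_1$, so the right-hand side vanishes. Hence $[X_1,X_2]$ is $B$-orthogonal to all of $\fr{h}$, which means $[X_1,X_2]\in \fr{m}_1$.

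With all three hypotheses of Corollary \ref{criterion} now verified, the corollary applies directly and the one-parameter family of metrics $g_\lambda$ defined by (\ref{ta}) turns $(G/K,g_\lambda)$ into a two-step g.o. space. The only conceptual subtlety is the bracket inclusion in step three, but as shown this is an immediate consequence of the $\operatorname{Ad}$-invariance of $B$ combined with the fact that $\fr{h}$ is a subalgebra; no further computation is required.
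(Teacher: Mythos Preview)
Your proposal is correct and follows essentially the same approach as the paper: verify the hypotheses of Corollary~\ref{criterion} by using the $\operatorname{Ad}$-invariance of $B$ to set up the $B$-orthogonal, $\operatorname{Ad}(K)$-invariant decomposition $\fr{m}=\fr{m}_1\oplus\fr{m}_2$ and to establish $[\fr{m}_1,\fr{m}_2]\subset\fr{m}_1$. The only cosmetic difference is that the paper obtains the bracket inclusion via $\operatorname{Ad}(H)\fr{m}_1\subset\fr{m}_1$ (from Example~\ref{bi} applied to the pair $H\subset G$) and then uses $\fr{m}_2\subset\fr{h}$, whereas you write out the underlying $\operatorname{ad}$-invariance computation directly; these are the same argument.
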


\begin{proof}

Let $\fr{k},\fr{h},\fr{g}$ be the Lie algebras of the groups $K,H,G$ respectively.  The subspaces $\fr{m}_1$ and $\fr{m}_2$ can be obtained by the $B$-orthogonal decompositions

\begin{equation}\label{123}\fr{g}=\fr{h}\oplus \fr{m}_1 \quad \makebox{and} \quad \fr{h}=\fr{k}\oplus \fr{m}_2,\end{equation}

\noindent such that 
\begin{equation}\label{124}\operatorname{Ad}(H)\fr{m}_1\subset \fr{m}_1, \quad \operatorname{Ad}(K)\fr{m}_2\subset \fr{m}_2
\end{equation}

\noindent
(cf. Example \ref{bi}).  The inner product $\left.B\right|_{\fr{m}}$ induces a naturally reductive metric on $G/K$.  Then relations (\ref{124}) and the orthogonality of $\fr{m}_1,\fr{m}_2$ with respect to $B$ imply that the decomposition $\fr{m}=\fr{m}_1\oplus \fr{m}_2$ is $\operatorname{Ad}(K)$-invariant and $B$-orthogonal.  Moreover, since $\operatorname{Ad}(H)\fr{m}_1\subset \fr{m}_1$, we have that $[\fr{m}_1,\fr{h}]\subset \fr{m}_1$, therefore,  

\begin{equation*}[\fr{m}_1,\fr{m}_2]\subset [\fr{m}_1,\fr{h}]\subset \fr{m}_1.\end{equation*}

\noindent
By Corollary \ref{criterion} it follows that $(G/K,g_{\lambda})$ is a two-step g.o. space.\end{proof}

\begin{example} The odd dimensional sphere $\mathbb S^{2n+1}$ can be considered as the total space of the homogeneous Hopf bundle 
\begin{equation}\label{fi}\mathbb S^1 \rightarrow \mathbb S^{2n+1} \rightarrow \mathbb CP^n.\end{equation}
Let $g_1$ be the standard metric of $\mathbb S^{2n+1}$.  We equip $\mathbb S^{2n+1}$ with an one parameter family of metrics $g_{\lambda}$, which ``deform" the standard metric along the Hopf circles $\mathbb S^1$.  \\
By setting $G=U(n+1)$, $K=U(n)$ and $H=U(n)\times U(1)$, the fibration (\ref{fi}) corresponds to the fibration

\begin{equation*}H/K \rightarrow G/K \rightarrow G/H.\end{equation*}

\noindent
Since $U(n+1)$ is compact, it admits a bi-invariant metric corresponding to an $\operatorname{Ad}(U(n+1))$-invariant positive definite inner product $B$ on $\fr{u}(n+1)$.  We identify each of the spaces $T_o\mathbb S^{2n+1}= T_o(G/K),T_o\mathbb CP^n= T_o(G/H)$, and $T_o\mathbb S^1= T_o(H/K)$ with corresponding subspaces $\fr{m},\fr{m}_1$, and $\fr{m}_2$ of $\fr{u}(n+1)$.  The desired 
one-parameter family of metrics $g_{\lambda}$ corresponds to the one-parameter family of positive definite inner products 

\begin{equation}\label{o}\langle \ ,\ \rangle=\left.B\right|_{\fr{m}_1}+\lambda\left.B\right|_{\fr{m}_2}, \quad \lambda>0\end{equation}

\noindent on $\fr{m}=\fr{m}_1\oplus \fr{m}_2$.  Note that for $\lambda=1$ the inner product (\ref{o}) induces the standard metric $g_1$ on $\mathbb S^{2n+1}$.  Then Proposition \ref{su} implies that $(\mathbb S^{2n+1},g_{\lambda})$ is a two-step g.o. space.  In particular, let $X\in T_o\mathbb S^{2n+1}$.  Then the unique geodesic $\gamma$ of  $(\mathbb S^{2n+1},g_{\lambda})$ with $\gamma(0)=o$ and $\dot{\gamma}(0)=X$, is given by

\begin{equation*}\gamma(t)=\pi(\exp t(X_1+\lambda X_2) \exp t(1-\lambda)X_2), \quad t\in \mathbb R,\end{equation*}

\noindent where $X_1$, $X_2$ are the projections of $X$ on $\fr{m}_1= T_o\mathbb CP^n$ and $\fr{m}_2= T_o\mathbb S^1$ respectively.\\
We remark that if $\lambda=1+\epsilon$, $\epsilon >0$, then the metrics $g_{1+\epsilon}$ are Cheeger deformations of the natural metric $g_1$.  The spaces $(\mathbb S^{2n+1},g_{1+\epsilon})$ are examples of Riemannian homogeneous spaces with positive sectional curvature (\cite{Ch}). 

\end{example} 

\begin{remark} There exist total spaces $G/K$ of homogeneous fibrations $H/K\rightarrow G/K\rightarrow G/H$ which are g.o. with respect to the metric (\ref{ta}) given in Proposition \ref{su}.  In \cite{Ta} H. Tamaru obtained a classification of the total spaces $G/K$ with the property that:

\noindent 1) $G/K$ is fibered over an irreducible symmetric space $G/H$, and \\
2) $G/K$ is g.o. with respect to the metric (\ref{ta}).
\end{remark}

\begin{remark} It was brought to the attention of the authors by Professor Yu.G. Nikonorov, that if $G$ is compact and semisimple, then it is possible to obtain a class of two-step g.o. metrics of $G/K$ as follows. 
 We can consider the action of $G\times L$ on $G/K$, where $L$ is the normaliser of $K$ in $G$.  If $\fr{l}$ is the Lie algebra of $L$, then there exists an $\operatorname{Ad}(K)$-invariant decomposition $\fr{g}=\fr{l}\oplus \fr{m}_1=\fr{k}\oplus \fr{m}_2\oplus \fr{m}_1$, with respect to the negative of the Killing form of $\fr{g}$ (here denoted by $B$).
  We endow $G/K$ with the $G$-invariant metric $g_{\lambda}$ as given in (\ref{new}).  
  Then by using an embedding of $\fr{m}$ into $\fr{g}\times \fr{l}$ as pointed in \cite[p. 585-586]{Zi}, one may lift the metric $g_{\lambda}$ to a naturally reductive metric $\widehat{g}_{\lambda}$ in the space $(G\times L)/\widehat{K}$, where $\widehat{K}=\left\{ {(a,b)\in G\times L:ab^{-1}\in K} \right\}$.  The Riemannian homogeneous space $((G\times L)/\widehat{K},\widehat{g}_{\lambda})$ is a g.o. space, hence the geodesics of $((G\times L)/\widehat{K},\widehat{g}_{\lambda})$ are orbits of the one parameter subgroups $(\exp (tX), \exp (-tY))$ of $G\times L$.  By using the right action of $L$ on $G/K$, the geodesics $\pi(\exp (tX), \exp (-tY))$ in $(G\times L)/\widehat{K}$ correspond to two-step homogeneous geodesics of the form $\pi(\exp tX \exp tY)$ in $G/K$.
\end{remark}

\section{Further examples of two-step g.o. spaces}

In the present section we will use Corollary \ref{criterion} to construct various classes of two-step g.o spaces.  The recipe is the following:\\

$\bullet$ Let $G/K$ be  a homogeneous space with reductive decomposition $\fr{g}=\fr{k}\oplus\fr{m}$ admitting a naturally reductive metric corresponding to a positive definite inner product $B$ on $\fr{m}$.\\
  
$\bullet$ We consider an $\operatorname{Ad}(K)$-invariant, orthogonal decomposition 
$\fr{m}=\fr{n}_1\oplus \cdots \oplus \fr{n}_s$ with respect to $B$.\\  

$\bullet$ We separate the submodules $\fr{n}_i$ into two groups as

\begin{equation*}\fr{m}_1=\fr{n}_{i_1}\oplus \cdots \oplus \fr{n}_{i_n} \quad \makebox{and} \quad \fr{m}_2=\fr{n}_{i_{n+1}}\oplus \cdots \oplus \fr{n}_{i_s},\end{equation*}

\noindent so that $[\fr{m}_1,\fr{m}_2]\subset \fr{m}_1$.\\ 

$\bullet$ We consider the decomposition $\fr{m}=\fr{m}_1\oplus \fr{m}_2$, which  is $\operatorname{Ad}(K)$-invariant and orthogonal with respect to $B$.  Then  Corollary \ref{criterion} implies that $G/K$ admits an one parameter family of metrics $g_{\lambda}$, so that $(G/K,g_{\lambda})$ is a two-step g.o. space.\\

\noindent
 We will apply the above recipe to the following classes of homogeneous spaces:

\noindent
1)\quad Lie groups with bi-invariant metrics equipped with an one-parameter family of left-invariant metrics.

\noindent
2)\quad Flag manifolds equipped with certain one-parameter families of diagonal metrics.

\noindent
3)\quad Generalized Wallach spaces equipped with three different types of diagonal metrics.

\noindent
4)\quad $k$-symmetric spaces where $k$ is even, endowed with an one parameter family of diagonal metrics.\\

 \subsection{Lie groups}

Let $G$ be a Lie group admitting a bi-invariant Riemannian metric and let $B$ be the corresponding $\operatorname{Ad}$-invariant positive definite inner product on its Lie algebra $\fr{g}$.  We consider a subgroup $K$ of $G$ with Lie algebra $\fr{k}$.  The subgroup $K$ induces an $\operatorname{Ad}$-invariant and orthogonal decomposition 
$\fr{g}=\fr{k}\oplus \fr{m}$
with respect to $B$, such that 
\begin{equation}\label{m}[\fr{k},\fr{m}]\subset \fr{m}\end{equation}

\noindent
(cf. Example \ref{bi}).  We view $G$ as the homogeneous space $G/\left\{ {e} \right\}$ and we endow $G$ with the left invariant metric $g_{\lambda}$ corresponding to the positive definite inner product 

\begin{equation}\langle \ ,\ \rangle=\left.B\right|_{\fr{m}}+\lambda \left.B\right|_{\fr{k}}, \quad \lambda>0.\end{equation}

By taking into account relation (\ref{m}) and by using Corollary \ref{criterion} we conclude that $(G,g_{\lambda})$ is a two-step g.o. space.

\subsection{Flag manifolds} 

A \emph{generalized flag manifold} is a homogeneous manifold $G/K$ where $G$ is a compact, connected and semisimple Lie group, and $K$ is the centralizer of a torus $T$ in $G$.  Every generalized flag manifold $G/K$ is a product of generalized flag manifolds $G_i/K$ of simple Lie groups $G_i$.  Therefore, to study flag manifolds it suffices to consider flag manifolds $G/K$, where $G$ is a compact, connected and simple Lie group.  The classification of the g.o. Riemannian flag manifolds $(G/K,g)$, where  $g$ is a non-standard $G$-invariant metric, was obtained in \cite{Al-Ar}. 
Let $G/K$ be a generalized flag manifold with $G$ simple.  Let $B$ be the negative of the Killing form of $\fr{g}$.  Then $B$ is an $\operatorname{Ad}$-invariant positive definite inner product on $\fr{g}$, which induces an orthogonal decomposition $\fr{g}=\fr{k}\oplus \fr{m}$, with $\fr{m}$ naturally identified with $T_o(G/K)$. 
 
We will briefly describe the structure of $\fr{m}$.  For a more detailed descripition, we refer  to \cite{Ar-Ch-Sa} or \cite{Bo-Fo-Ro}.
Consider the complexified Lie algebra $\fr{g}^{\mathbb C}$ and a Cartan subalgebra $\fr{h}^{\mathbb C}$ of $\fr{g}^{\mathbb C}$ which is the complexification of a  maximal abelian subalgebra $\fr{h}$ of $\fr{g}$.  Let $\Pi=\left\{ {\alpha_1,\dots ,\alpha_{r},\alpha_{r+1},\dots ,\alpha_{\ell}}\right\}$ be a set of simple roots of the root system $R$ of $\fr{g}^{\mathbb C}$, with respect to $\fr{h}^{\mathbb C}$.  The flag manifold $M=G/K$ is determined by a subset $\Pi_M$ of $\Pi$, say $\Pi_M=\left\{ {\alpha_1,\dots ,\alpha_{r} }\right\}$.  We call $\Pi_M$ the \emph{set of simple complementary roots of $G/K$}.  Let $\Pi_K=\Pi \setminus \Pi_M$ and define $R_K= \operatorname{span}_{\mathbb Z}\left\{ {\alpha:\alpha\in \Pi_K } \right\}\cap R$, $R_M=R\setminus R_K$ and $R_M^+=\left\{ {\alpha\in R_M :\alpha>0} \right\}$, $R_K^+=\left\{ {\alpha\in R_K :\alpha>0} \right\}$.  We choose a Weyl basis $\left\{ {E_{\alpha}:\alpha\in R } \right\}$ of $\fr{g}^{\mathbb C}$, and
let $\alpha\in R^+$.  We set 
\begin{equation*}A_\alpha=E_{\alpha}+E_{-\alpha},\quad  B_\alpha=i(E_{\alpha}-E_{-\alpha}) \quad \makebox{and} \quad \fr{m}_{\alpha}=\mathbb RA_\alpha \oplus \mathbb RB_\alpha.\end{equation*}

 Then 
\begin{equation}\label{tangent}\fr{m}=\sum_{\alpha \in R_M^+}{\fr{m}_\alpha},\end{equation}

\begin{equation}\label{tangent1}\fr{k}=\fr{h}\oplus \sum_{\alpha \in R_K^+}{\fr{m}_\alpha}. \end{equation}

  Moreover, by using properties of root decompositions of simple Lie algebras, we obtain that
\begin{equation}\label{root}[\fr{m}_\alpha,\fr{m}_\beta]\subset \left\{ \begin{array}{ll}  \fr{m}_{\alpha+\beta}\oplus \fr{m}_{\left|\alpha-\beta \right|}, \quad \mbox{if} \quad \alpha+\beta \in R \quad \mbox{or} \quad \alpha-\beta \in R, \\
\left\{ {0} \right\}, \quad \mbox{otherwise}.
\end{array}
\right. \end{equation}

 The decomposition (\ref{tangent}) of $\fr{m}$ is orthogonal with respect to $B$.  We will now separate the submodules $\fr{m}_{\alpha}$, $\alpha\in R_M^+$ into two groups $\fr{m}_1,\fr{m}_2$ so that $[\fr{m}_1,\fr{m}_2]\subset \fr{m}_1$.  We achieve this as follows.

Each $\alpha\in R_M^+$ can be expressed as
\begin{equation}\label{roots}\alpha=c^{\alpha}_1\alpha_1+\cdots +c^{\alpha}_{r}\alpha_{r}+c^{\alpha}_{r+1}\alpha_{r+1}+\cdots +c^{\alpha}_{\ell}\alpha_{\ell},\end{equation}
where $c^{\alpha}_i\geq 0$, $c^{\alpha}_i\in \mathbb Z$ for $i=1,\cdots ,r+\kappa$ and at least one of the $c_1^{\alpha},\dots ,c^{\alpha}_r$ is not zero.  Each $\alpha\in R_K^+$ can be expressed as

\begin{equation}\label{roots1}\alpha=c^{\alpha}_{r+1}\alpha_{r+1}+\cdots +c^{\alpha}_{\ell}\alpha_{\ell},\end{equation}
where $c^{\alpha}_i\geq 0$ and $c^{\alpha}_i\in \mathbb Z$ for $i=r+1,\cdots ,\ell$.  Let $\alpha_{i_0}$ be a  simple root of $\Pi_M$.  We set\\

\begin{equation*}R_{M_{1}}=\left\{ {\alpha\in R_M^+:\makebox{$c^{\alpha}_{i_0}$ is odd}}\right\}, \quad R_{M_{2}}=\left\{ {\alpha\in R_M^+:\makebox{$c^{\alpha}_{i_0}$ is even}}\right\},\end{equation*}

\noindent and let
\begin{equation*}\fr{m}_1=\sum_{\alpha\in R_{M_{1}}}{\fr{m}_\alpha} \qquad \fr{m}_2=\sum_{\alpha\in R_{M_{2}}}{\fr{m}_\alpha}.\end{equation*}
Then $R_M^+=R_{M_{1}}\cup R_{M_{2}}$, $R_{M_{1}}\cap R_{M_{2}}=\emptyset$.  By taking into account the decomposition (\ref{tangent}) we conclude that 
\begin{equation}\label{conc}\fr{m}=\fr{m}_1\oplus \fr{m}_2.\end{equation}   
Also, by taking into account the expressions (\ref{roots}) and (\ref{roots1}), it follows that if $\alpha\in R_K$ and $\beta\in R_{M_i}$, then $\alpha+\beta\in R_{M_i}$, $i=1,2$.  We conclude that the decomposition (\ref{conc}) is $\operatorname{Ad}(K)$-invariant.  Moreover, by using relation (\ref{root}) we obtain that

\begin{equation*}[\fr{m}_1,\fr{m}_2]=\sum_{\alpha\in R_{M_1}, \beta\in R_{M_2}}{\fr{m}_{\alpha+\beta}\oplus \fr{m}_{\left|\alpha-\beta\right|}}.\end{equation*}
We claim that $[\fr{m}_1,\fr{m}_2]\subset \fr{m}_1$.  
Indeed, if $\alpha \in R_{M_1}$, $\beta \in R_{M_2}$, then  the coefficients $c^{\alpha}_{i_0}$ and $c^{\beta}_{i_0}$ are odd and even respectively.
Moreover, we have that $c^{\alpha+\beta}_{i_0}=c^{\alpha}_{i_0}+c^{\beta}_{i_0}$ and $c^{\left|\alpha-\beta\right|}_{i_0}=|c^{\alpha}_{i_0}-c^{\beta}_{i_0}|$.  Therefore, if $\alpha\in R_{M_1}$ and $\beta\in R_{M_2}$ then $\alpha+\beta\in R_{M_1}$ and $\left|\alpha-\beta\right|\in R_{M_1}$, and the claim follows. 

To summarise,  for a generalised flag manifold $G/K$ of  a simple Lie group $G$ there exist 
$\operatorname{Ad}(K)$-invariant decompositions $\fr{m}=\fr{m}_1\oplus \fr{m}_2$ with respect to the negative of the Killing form of $\fr{g}$, such that $[\fr{m}_1,\fr{m}_2]\subset \fr{m}_1$.  Such decompositions are determined by a choice of $\alpha_{i_0}\in \Pi_M$, therefore the number of such decompositions is equal to (at least) the cardinality of $\Pi_M$.  Then by using Corollary \ref{criterion} we obtain the following.

\begin{prop} Let $G/K$ be generalized flag manifold of a simple Lie group $G$ and let $\Pi_M$ be the set of simple complementary roots of $G/K$.  Let $p$ be the cardinality of $\Pi_M$.  Then $G/K$ admits at least $p$ one-parameter families of Riemannian metrics $g_{\lambda_1},\dots ,g_{\lambda_p}$, such that each space $(G/K, g_{\lambda_i})$, $i=1,\dots,p$ is a two-step g.o. space.\end{prop}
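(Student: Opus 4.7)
The plan is to produce, for each choice of simple complementary root $\alpha_{i_0}\in \Pi_M$, an $\operatorname{Ad}(K)$-invariant and $B$-orthogonal decomposition $\fr{m}=\fr{m}_1\oplus\fr{m}_2$ satisfying $[\fr{m}_1,\fr{m}_2]\subset\fr{m}_1$, and then to apply Corollary \ref{criterion} to obtain the corresponding one-parameter family $g_{\lambda_{i_0}}$. Since different $\alpha_{i_0}\in \Pi_M$ give genuinely distinct partitions of $R_M^+$ (hence distinct decompositions of $\fr{m}$), and $|\Pi_M|=p$, this will produce the desired $p$ families.

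For each fixed $\alpha_{i_0}\in \Pi_M$, I would partition $R_M^+$ into $R_{M_1}$ and $R_{M_2}$ according to whether the coefficient $c^{\alpha}_{i_0}$ in the expansion (\ref{roots}) is odd or even, and set $\fr{m}_j=\sum_{\alpha\in R_{M_j}}\fr{m}_\alpha$ for $j=1,2$. Then $\fr{m}=\fr{m}_1\oplus\fr{m}_2$ is automatically $B$-orthogonal by (\ref{tangent}) and the orthogonality of the root spaces $\fr{m}_\alpha$. To establish $\operatorname{Ad}(K)$-invariance it suffices, by Remark \ref{remark} and the connectedness of $K$, to check that $[\fr{k},\fr{m}_j]\subset\fr{m}_j$; using the decomposition (\ref{tangent1}) of $\fr{k}$ and the root bracket rule (\ref{root}), the only potentially nontrivial brackets are $[\fr{m}_\alpha,\fr{m}_\beta]$ with $\alpha\in R_K^+$ and $\beta\in R_{M_j}$, and these land in $\fr{m}_{\alpha+\beta}\oplus\fr{m}_{|\alpha-\beta|}$; since $\alpha\in R_K$ contributes $c^{\alpha}_{i_0}=0$ (as $i_0\in\{1,\dots,r\}$ and $\alpha$ is expanded only over $\alpha_{r+1},\dots,\alpha_\ell$), the parity of $c^{\alpha\pm\beta}_{i_0}$ equals the parity of $c^{\beta}_{i_0}$, so $\alpha\pm\beta\in R_{M_j}$.

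The main content, and the step I expect to be the most delicate to write cleanly, is the verification of the mixed bracket condition $[\fr{m}_1,\fr{m}_2]\subset\fr{m}_1$. By (\ref{root}) we have
\begin{equation*}
[\fr{m}_1,\fr{m}_2]\subset \sum_{\alpha\in R_{M_1},\,\beta\in R_{M_2}} \bigl(\fr{m}_{\alpha+\beta}\oplus \fr{m}_{|\alpha-\beta|}\bigr),
\end{equation*}
and one has to show each summand lies in $\fr{m}_1$. Here the key observation is that $c^{\alpha+\beta}_{i_0}=c^{\alpha}_{i_0}+c^{\beta}_{i_0}$ is odd$+$even$=$odd, and similarly $c^{|\alpha-\beta|}_{i_0}=|c^{\alpha}_{i_0}-c^{\beta}_{i_0}|$ is odd. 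Thus whenever $\alpha+\beta$ or $|\alpha-\beta|$ lies in $R$ it automatically lies in $R_{M_1}$ (the $R_M$ property follows because the $i_0$-coefficient is nonzero, so it cannot sit in $R_K$). Hence $[\fr{m}_1,\fr{m}_2]\subset\fr{m}_1$.

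With the three conditions of Corollary \ref{criterion} verified, that corollary yields a one-parameter family of $G$-invariant metrics $g_{\lambda_{i_0}}$ on $G/K$, corresponding to the inner products $B|_{\fr{m}_1}+\lambda B|_{\fr{m}_2}$, each making $(G/K,g_{\lambda_{i_0}})$ a two-step g.o. space. Running this construction for each of the $p$ elements of $\Pi_M$ produces $p$ such families, completing the proof.
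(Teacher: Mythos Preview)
Your proposal is correct and follows essentially the same approach as the paper: for each $\alpha_{i_0}\in\Pi_M$ you partition $R_M^+$ by the parity of $c^\alpha_{i_0}$, set $\fr{m}_j=\sum_{\alpha\in R_{M_j}}\fr{m}_\alpha$, verify $\operatorname{Ad}(K)$-invariance and $[\fr{m}_1,\fr{m}_2]\subset\fr{m}_1$ via the root bracket rule and the odd$+$even$=$odd argument, and then invoke Corollary~\ref{criterion}. Your write-up is in fact slightly more explicit than the paper's in checking $\operatorname{Ad}(K)$-invariance and in noting why $\alpha\pm\beta$ cannot fall into $R_K$.
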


We give some examples of the above construction.
 
\begin{example} Let $G/K$ be a generalized flag manifold whose isotropy representation decomposes into two irreducible submodules $\fr{m}_1,\fr{m}_2$.  The submodules $\fr{m}_1,\fr{m}_2$ constructed above, are obtained by choosing a simple root of Dynkin mark $2$ of $\fr{g}^{\mathbb C}$.  Note that the metric of $G/K$ corresponding to the inner product $\langle \ ,\ \rangle=B|_{\fr{m}_1}+2B|_{\fr{m}_2}$ is a K\"ahler-Einstein metric (cf. \cite{BH}).
\end{example}

\begin{example} Let $G/K$ be a generalized flag manifold with three isotropy summands.  This is constructed by choosing either $\Pi_M=\left\{ {\alpha_{i_0}} \right\}$ with Dynkin mark $\mu_{i_0}=3$ (Type I), or  $\Pi_M=\left\{ {\alpha_1,\alpha_2} \right\}$ with Dynkin marks $\mu_1=\mu_2=1$ (Type II).  The classification of flag manifolds with three isotropy summands was obtained in \cite{Ki}.\\
Here we consider  a flag manifold $G/K$ of Type I. 
Let $\alpha\in R_M^+$ and let $c^\alpha_{i_0}$ be the coefficient of $\alpha_{i_0}$ in the decomposition (\ref{roots}).
The three isotropy submodules $\fr{n}_1, \fr{n}_2, \fr{n}_3$ $T_o(G/K)$ are given by 

\begin{equation*}\fr{n}_i=\sum_{\alpha\in R_M^+: c^\alpha_{i_0}=i}{\fr{m}_{\alpha}},\qquad i=1,2,3.\end{equation*}

We set $\fr{m}_1=\fr{n}_1\oplus \fr{n}_3$ and $\fr{m}_2=\fr{n}_2$.  Then, it is easy to check that $[\fr{m}_1,\fr{m}_2]\subset \fr{m}_1$, hence $G/K$ endowed with the metric $g_{\lambda}$ induced by the inner product $\langle \ ,\ \rangle=\left.B\right|_{\fr{m}_1}+\lambda\left.B\right|_{\fr{m}_2}$ is a Riemannian two-step g.o. space.
\end{example}

\subsection{Generalized Wallach spaces}

Let $M=G/K$ be a generalized Wallach space. Here $G$ is a compact semisimple Lie group and the isotropy representation of $M$ decomposes into three irreducible submodules $\fr{n}_i$, $(i=1,2,3)$ such that 
\begin{equation}\label{ni}[\fr{n}_i,\fr{n}_i]\subset \fr{k}.\end{equation}
  The classification of generalized Wallach spaces was recently obtained in \cite{Ni} and in
  \cite{Ch-Ka-Li}.  In \cite{Ar-So}  the authors investigated $G$-invariant metrics whose geodesics are of the form $\gamma(t)=\pi(\exp(tX) \exp(tY) \exp(tZ))$, $X,Y,Z\in \fr{m}$. 
  The main result there (Theorem 1.2) states that if $M$ is endowed with a $G$-invariant metric $g_{\lambda}$ induced by the inner product 
\begin{equation}\label{wal}\langle \ ,\ \rangle=\left.B\right|_{{\fr{n}}_i+\fr{n}_j}+\lambda \left.B\right|_{\fr{n}_l}, \quad i,j,l \quad \makebox{pairwise distinct}\end{equation}
\noindent
(where $B$ is the negative of the Killing form of $\fr{g}$), 
then the geodesics $\gamma$ of $(M,g_{\lambda})$ with $\gamma(0)=o$ are of the form $\gamma(t)=\pi(\exp(tX)\exp(tY))$, $X,Y\in \fr{m}$.  
If we set $\fr{m}_1=\fr{n}_i\oplus \fr{n}_j$ and $\fr{m}_2=\fr{n}_l$ we see that $\fr{m}= \fr{m}_1\oplus \fr{m}_2$.  Moreover, relation (\ref{ni}) implies that $[\fr{m}_1,\fr{m}_2]\subset \fr{m}_1$.
By using Corollary \ref{criterion} it follows that every generalized Wallach space $M$ admits three distinct one-parameter families of metrics $g_{\lambda}$, induced from the inner products (\ref{wal}), such that $(M,g_{\lambda})$ is a 2-step g.o space. This verifies the main theorem in \cite{Ar-So}.

\subsection{$k$-symmetric spaces, $k$ even}

Let $G$ be a connected Lie group and let $\phi:G\rightarrow G$ be an automorphism of $G$ such that $\phi^k=\operatorname{id}$.  Let $G^{\phi}=\left\{ {g\in G:\phi(g)=g} \right\}$ be the subgroup of fixed points of $G$ and let $G_o^{\phi}$ be its identity component.  Assume that there exists a closed subgroup $K$ of $G$ such that $G_o^{\phi}\subset K\subset  G^{\phi}.$  
Then $G/K$ is called a $k$-{\it symmetric space} (cf. \cite{Ko1}, \cite{Ko2}).
Let $G/K$ be a $k$-symmetric space where $G$ is a compact, connected and semisimple Lie group.  Let $B$ be the negative of the Killing form of $\fr{g}$.  Let $s=\left.[\frac{k-1}{2}\right]$, the integer part of $\frac{k-1}{2}$.  We set

\begin{equation*}u=\left\{ 
\begin{array}{lll}
s,  \quad \makebox{if $k$ is odd} \ \\

s+1 ,   \quad \makebox{if $k$ is even}.
\end{array}
\right.
\end{equation*}

We obtain the following $\operatorname{Ad}(K)$-invariant and orthogonal decomposition of $\fr{g}$ with respect to $B$ (cf. \cite{Ba},\cite{Ba-Sa}):

\begin{equation*}\fr{g}=\fr{k}\oplus \fr{m}=\fr{n}_0\oplus \fr{n}_1\oplus \cdots \oplus \fr{n}_t,\end{equation*}

where $\fr{n}_0=\fr{k}$ and the subspaces $\fr{n}_i$ satisfy the following relations for $0 \leq j \leq i \leq t$:

\begin{equation}\label{hat}[\fr{n}_i,\fr{n}_j]=\left\{ 
\begin{array}{lll}
\fr{n}_{i+j}\oplus \fr{n}_{i-j},  \quad i+j\leq t \ \\

\fr{n}_{k-(i+j)}\oplus \fr{n}_{i-j} ,   \quad i+j>t.
\end{array}
\right.
\end{equation}

We assume that $G/K$ is a $k$-symmetric space with $k$ even and we set

\begin{equation*} \fr{m}_1=\sum_{1\leq 2i+1 \leq t}{\fr{n}_{2i+1}} \qquad \fr{m}_2=\sum_{1\leq 2i \leq t}{\fr{n}_{2i}}.\end{equation*}

Then we have that $\fr{m}=\fr{m}_1\oplus \fr{m}_2$.  Moreover, by using relations (\ref{hat}) it follows that $[\fr{m}_1,\fr{m}_2]\subset \fr{m}_1$.  By using Corollary \ref{criterion} we obtain the following proposition:

\begin{prop}Let $G/K$ be a homogeneous symmetric space of order $2k$, where $G$ is a compact, semisimple and connected Lie group.  Then $G/K$ admits a family of metrics $g_{\lambda}$ such that $(G/K,g_{\lambda})$ is a two-step g.o space.\end{prop}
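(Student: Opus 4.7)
The plan is to apply Corollary \ref{criterion} with $B$ equal to the negative of the Killing form of $\fr{g}$ and with the $\operatorname{Ad}(K)$-invariant, $B$-orthogonal decomposition $\fr{m} = \fr{m}_1 \oplus \fr{m}_2$ defined above by separating the modules $\fr{n}_1,\dots,\fr{n}_t$ according to the parity of their index. Since $G$ is compact and semisimple, $B$ is positive definite and $\operatorname{Ad}(G)$-invariant, so the induced bi-invariant metric on $G$ descends to a naturally reductive Riemannian metric on $G/K$; this puts us in the setup of the corollary. The $B$-orthogonality and $\operatorname{Ad}(K)$-invariance of each $\fr{n}_i$, recorded in the preceding discussion, pass immediately to the sums $\fr{m}_1$ and $\fr{m}_2$.

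Hence the only substantive step is the bracket inclusion $[\fr{m}_1,\fr{m}_2] \subset \fr{m}_1$, which reduces to checking $[\fr{n}_i,\fr{n}_j] \subset \fr{m}_1$ whenever $i$ is odd and $j$ is even. I would argue by parity using (\ref{hat}). Note that $|i-j|$ is automatically odd. If $i+j \leq t$, then $[\fr{n}_i,\fr{n}_j] \subset \fr{n}_{i+j} \oplus \fr{n}_{|i-j|}$, and since $i+j$ is odd both summands lie in $\fr{m}_1$. If $i+j > t$, then $[\fr{n}_i,\fr{n}_j] \subset \fr{n}_{k-(i+j)} \oplus \fr{n}_{|i-j|}$, and here the key observation is that $k$ is even while $i+j$ is odd, so $k-(i+j)$ is odd, and again both summands lie in $\fr{m}_1$.

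The main (and really the only) obstacle is this wraparound case $i+j > t$: the hypothesis that the order of the symmetric space is even is used in exactly one place, namely to ensure that the reflection $m \mapsto k-m$ preserves the parity of $m$. Were the order odd, the partition into odd- and even-indexed modules would fail to respect the bracket in general, and a different grouping would be required. Once this parity step is settled, Corollary \ref{criterion} directly produces the one-parameter family of $G$-invariant metrics $g_{\lambda}$ on $G/K$ induced by the inner product $B|_{\fr{m}_1} + \lambda\, B|_{\fr{m}_2}$, each of which makes $(G/K,g_{\lambda})$ into a two-step g.o.\ space, as claimed.
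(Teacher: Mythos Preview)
Your proposal is correct and follows exactly the paper's approach: the paper defines the same odd/even partition $\fr{m}_1,\fr{m}_2$ of the modules $\fr{n}_i$, asserts $[\fr{m}_1,\fr{m}_2]\subset \fr{m}_1$ ``by using relations (\ref{hat})'', and then invokes Corollary \ref{criterion}. Your write-up simply makes explicit the parity check in the two cases of (\ref{hat}), including the observation that evenness of the order is precisely what makes the wraparound index $k-(i+j)$ odd.
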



\begin{thebibliography}{50} 

\bibitem[Al-Ar]{Al-Ar}
D. Alekseevsky and A. Arvanitoyeorgos:
{\it Metrics with homogeneous geodesics on flag manifolds},
Trans. Amer. Math. Soc. 359(8) (2007) 3769--3789. 

	

\bibitem[Ar-Ch-Sa]{Ar-Ch-Sa}
A. Arvanitoyeorgos, I. Chrysikos and Y. Sakane:
{\it Homogeneous Einstein metrics on generalised flag manifolds with five isotropy summands},
Intern. J. Math.
24(10) (2013) 1350077.
	
\bibitem[Ar-So]{Ar-So}
	A. Arvanitoyeorgos and N. P. Souris:
	{\it Geodesics in generalised Wallach spaces},
J. Geom. 106(3) (2015) 583--603.
	
\bibitem[Ba]{Ba}
V. V. Balashchenko:
{\it Canonical distributions on Riemannian homogeneous k-symmetric spaces},
J. Geom. Phys.  87 (2015) 30--38.

\bibitem[Ba-Sa]{Ba-Sa}
V.V. Balashchenko and A.S. Samsonov
{\it Nearly K\"ahler and Hermitian f-structures on homogeneous k-symmetric spaces}, 
Dokl. Math. 81(3) (2010) 386--389.


\bibitem[Ber]{Ber}
L. B. Bergery:
{\it Sur certaines fibrations d'espaces homogenes riemanniens},
Compositio Mathematica 30 (1975) 43--61.

\bibitem[Bo-Fo-Ro]{Bo-Fo-Ro}
	M. Bordemann, M. Forger and H. Romer:
	{\it Homogeneous Kahler Manifolds: Paving the Way Towards New Supersymmetric Sigma Models},
	Commun. Math. Phys. 102 (1986) 605--647.
	
	\bibitem[Bo-Hi]{BH}
 A. Borel and F. Hirzebruch: 
 {\it Characteristic classes and homogeneous spaces I},
  Amer. J. Math. 80  (1958)  458--538.

 \bibitem[Ch-Ka-Li]{Ch-Ka-Li}
Z. Cheng, Y. Kang and K. Liang:
{\it Invariant Einstein metrics on three-locally-symmetric spaces}, Commun. Anal. Geom. (to appear)
arXive:1411.2694.

		
	 \bibitem[Ch]{Ch}
	J. Cheeger:
	{\it Some examples of manifolds of nonnegative curvature},
	J. Differential Geom. 8(4) (1973) 623--628.
	
\bibitem[Da-Zi]{Da-Zi}
J. E. D'Atri and W. Ziller:
{\it Naturally reductive metrics and Einstein metrics on compact Lie groups},
Mem. Am. Math. Soc. 19 (215) (1979).	
	
	\bibitem[Do]{Do}
 R. Dohira: 
 {\it Geodesics in reductive homogeneous spaces},
  Tsukuba J. Math. 19(1)  (1995)  233--243.
	

\bibitem[Du]{Du}
Z. Du\v sek:
{\it Survey on homogeneous geodesics},
Note Mat. 1 suppl. n. 1 (2008) 147--168.

\bibitem[Hel]{Hel}
 S. Helgason:
 {\it Differential Geometry, Lie Groups and Symmetric Spaces},
  Academic Press, New York 1978.
	
	\bibitem[Ki]{Ki}
M. Kimura:
{\it Homogeneous Einstein metrics on certain K\"ahler C-spaces}, Adv. Stud.  Pure Math. 18-I (1990) 303--320.


   \bibitem[Ko1]{Ko1} O. Kowalski: {\it Existence of generalized symmetric Riemannian spaces of arbitrary order}, J. Differential Geom. 12 (1977) 203--208.

   \bibitem[Ko2]{Ko2} O. Kowalski: {\it Generalized Symmetric Spaces}, Springer LNM 805, Springer-Verlag, New York, 1980.
	
	
	
	\bibitem[Ko-Va]{Ko-Va}
	O. Kowalski and L. Vanhecke:
	{\it Riemannian manifolds with homogeneous geodesics},
	Un. Mat. Ital. B 7 (5) (1991), 189--246.
	
	\bibitem[Lee]{Lee}
	J. M. Lee:
	{\it Introduction to Smooth Manifolds},
	Springer, New York 2013
	
	\bibitem[Ni]{Ni}
	Yu.G. Nikonorov:
	{\it Classification of generalised Wallach spaces},
	Geom. Dedicata 181(1) (2016) 193--212.
	
	\bibitem[Ta]{Ta}
 H. Tamaru: 
 {\it Riemannian g.o. spaces fibered over irreducible symmetric spaces},
  Osaka J. Math. 36  (1999)  835--851.
	
	\bibitem[Wa]{Wa}
	H. C. Wang:
	{\it Discrete nilpotent subgroups of Lie groups},
	J. Differential Geometry 3 (1969) 481--492.

\bibitem[Zi]{Zi}
W. Ziller:
{\it The Jacobi equation on naturally reductive compact Riemannian homogeneous spaces},
Comm. Math. Helv. 52 (1977) 573--590. 



\end{thebibliography}
\end{document}